\documentclass[numbers,webpdf]{ima-authoring-template}%
\graphicspath{{Fig/}}

\theoremstyle{thmstyletwo}
\newtheorem{theorem}{Theorem}
\newtheorem{proposition}[theorem]{Proposition}

\newtheorem{remark}{Remark}
\newtheorem{definition}{Definition}
\newtheorem{lemma}[theorem]{Lemma}
\newtheorem{problem}{Problem}
\numberwithin{equation}{section}

\usepackage{hyperref}

\allowdisplaybreaks[4]

\DeclareMathOperator*{\arginf}{arg\,inf}
\newcommand{\rd}{\mathrm{d}}
\newcommand{\R}{\mathbb{R}}
\newcommand{\T}{\mathsf{T}}

\begin{document}

\DOI{DOI HERE}
\copyrightyear{2021}
\vol{00}
\pubyear{2021}
\access{Advance Access Publication Date: Day Month Year}
\appnotes{Paper}
\copyrightstatement{Published by Oxford University Press on behalf of the Institute of Mathematics and its Applications. All rights reserved.}
\firstpage{1}

\title[JKO for Landau]{JKO for Landau: a variational particle method for homogeneous Landau equation}

\author{Yan Huang\ORCID{0009-0002-9077-9929} and Li Wang*\ORCID{0000-0002-0593-8175}
\address{\orgdiv{School of Mathematics}, \orgname{University of Minnesota}, \orgaddress{\street{206 Church St. SE}, \street{Minneapolis}, \postcode{55455}, \state{MN}, \country{USA}}}}

\corresp[*]{Corresponding author: \href{liwang@umn.edu}{liwang@umn.edu}}

\received{Date}{0}{Year}
\revised{Date}{0}{Year}
\accepted{Date}{0}{Year}

\abstract{
Inspired by the gradient flow viewpoint of the Landau equation and the corresponding dynamic formulation of the Landau metric in \cite{carrillo2024landau}, we develop a novel implicit particle method for the Landau equation in the framework of the JKO scheme. We first reformulate the Landau metric in a computationally friendly form, and then translate it into the Lagrangian viewpoint using the flow map. A key observation is that, while the flow map evolves according to a rather complicated integral equation, the unknown component is simply a score function of the corresponding density plus an additional term in the null space of the collision kernel. This insight guides us in designing and training the neural network for the flow map. Additionally, the objective function is in a double summation form, making it highly suitable for stochastic methods. Consequently, we design a tailored version of stochastic gradient descent that maintains particle interactions and significantly reduces the computational complexity. Compared to other deterministic particle methods, the proposed method enjoys exact entropy dissipation and unconditional stability, therefore making it suitable for large-scale plasma simulations over extended time periods.}

\keywords{particle method; Landau equation; gradient flows; neural network; stochastic optimization.}

\maketitle

\section{Introduction} 
The Landau equation is a fundamental kinetic equation used to model the behavior of charged particles interacting via Coulomb forces \cite{landau1958kinetic}. It is especially relevant for plasmas where collision effects can be significant. When spatial dependence is ignored, the Landau equation takes the following form:
\begin{equation}\label{landau}
    \partial_t f = Q(f,f) =: \nabla_v \cdot \left[ \int_{\R^{d}} C_{\gamma} A(v-v_*) \left( f(t, v_*)\nabla_v f(t, v) - f(t, v)\nabla_{v_*} f(t, v_*) \right) \rd v_* \right] \,,
\end{equation}
where $f(t, v)$ for $(t, v) \in \R^{+}  \times \R^d$ with $d \geq 2$, is the mass distribution function of charged particles, such as electrons or ions, at time $t$ with velocity $v$. The operator $Q(f, f)$ is known as the Landau collision operator, which can be derived from the Boltzmann collision operator when small-angle deviations dominate. The collision kernel $A$ is given by:
\begin{equation*}
    A(z)=|z|^{\gamma+2} \left(I_d - \frac{z \otimes z}{|z|^{2}} \right) =: |z|^{\gamma+2} \Pi(z) \,, 
\end{equation*}
where $I_d$ is the identity matrix. As written, $\Pi(z)$ denotes the projection onto $\{z\}^{\perp}$. The parameter $C_{\gamma} > 0$ is the collision strength. The exponent $\gamma$ determines the type of interaction which ranges from $-d-1$ to $1$. Specifically, $0 < \gamma \leq 1$ is associated with hard potentials, while $\gamma < 0$ corresponds to the soft potentials. Of particular interest is the case when $d = 3$ and $\gamma = -3$, which corresponds to the Coulomb interaction in plasma \cite{degond1992fokker, Villani1998b}. Alternatively, $\gamma=0$ is known as the Maxwellian case, where the equation simplifies to a degenerate linear Fokker-Planck equation \cite{villani1998a}.

The theoretical understanding of \eqref{landau} remains limited and continues to be an active area of research. The well-established case is for hard potentials or the Maxwellian molecules, with seminal works \cite{villani2000a, villani2000b, villani1998a} and related literature addressing both the well-posedness and regularity of the solution. Significant progress on soft potentials was made in \cite{Guo2002}, which covers the global existence and uniqueness of solutions, but only for those near the Maxwellian distribution. A recent breakthrough in \cite{guillen2023landau} tackles the Coulomb case, showing that the Fisher information is monotonically decreasing over time, thereby ensuring that the solution remains globally bounded in time.

The numerical computation of \eqref{landau} also presents significant challenges due to the complexity of the operator $Q$, the high dimensionality in $v$, the stiffness with large collision strength $C_\gamma$, and the need to preserve the physical properties of the solution. Previous efforts have included methods such as the spectral method \cite{Pareschi_Russo_Toscani_2000, ZHANG2017470, Li_Wang_Wang_2020}, the stochastic Monte Carlo method \cite{dimarco2010direct, ROSIN2014140}, the finite difference entropy method \cite{degond1994entropy, buet1998conservative}, the deterministic particle method \cite{carrillo2020particle, Hirvijoki_2021}, and the time implicit methods \cite{Lemou2005, TAITANO2015357}, each addressing various aspects of these challenges. 

In this paper, we aim to develop a method that simultaneously addresses all the challenges except for exact energy conservation, which is maintained only up to $O(\Delta t)$ accuracy. Our approach builds upon the novel gradient flow perspective of \eqref{landau} introduced in \cite{carrillo2024landau}, ensuring that it is structure-preserving by design. To handle high dimensions more efficiently, our method uses sampling-based particles and incorporates the approximation capabilities of neural networks. Unlike many physics-based machine learning approaches for solving PDEs, our use of neural networks is minimal, therefore significantly simplifying the training process. Compared to recent efforts that also employ neural networks for solving the Landau equation \cite{HUANG2025114053,ilin2024}, our method differs primarily in its design of an implicit scheme rather than an explicit one. As a result, our method is exactly entropy-dissipative and exhibits significantly improved stability for stiff problems, making it highly suitable for large-scale time simulations of plasma. We also emphasize that, although our approach appears more involved than \cite{HUANG2025114053,ilin2024}, the computational complexity remains $O(N)$; the prefactor, however, may be larger due to the increased complexity of training. Nevertheless, we demonstrate through a concrete example in Section \ref{sec:5.3} that, despite the higher per-step cost, the overall efficiency gain for stiff problems makes the method worthwhile.

To lay out the main idea of our method, we briefly revisit the derivation of the Landau equation as a gradient flow \cite{carrillo2024landau}, in a formal manner. First, rewrite $Q$ in the log form 
\begin{equation*}
    Q(f, f) = \nabla \cdot \left[ \int_{\R^d} C_{\gamma} A(v-v_*)(\nabla\log f - \nabla_* \log f_{*}) f f_{*} \rd v_* \right] \,,
\end{equation*}
where the following abbreviated notations are used: 
\begin{equation*}
    f:=f(t, v), ~ 
    f_{*}:=f(t, v_*), ~
    \nabla := \nabla_{v}, ~
    \nabla_* := \nabla_{v_*}\,.
\end{equation*}
For an appropriate test function $\varphi = \varphi(v)$,  \eqref{landau} admits the following weak form:
\begin{equation}\label{landau2}
    \frac{\rd}{\rd t} \int_{\R^d} \varphi f \rd v = -\frac{1}{2} \iint_{\R^{2d}} C_{\gamma} (\nabla\varphi - \nabla_*\varphi_*) \cdot A(v-v_*) (\nabla\log f - \nabla_*\log f_{*}) f f_{*} \rd v \rd v_* \,.
\end{equation}
Choosing $\varphi(v) = 1, v, |v|^2$ leads to the conservation of mass, momentum, and energy. Inserting $\varphi(v) = \log f(v)$, one obtains the entropy decay due to the fact that $A$ is symmetric and semi-positive definite.

Define a new gradient operator:
\begin{equation}\label{new_grad}
    \tilde{\nabla} \varphi:= |v-v_*|^{1+\gamma/2} \Pi(v-v_*)(\nabla\varphi-\nabla_*\varphi_*) \,.
\end{equation}
Since $\Pi^2 = \Pi$, the weak form \eqref{landau2} can be rewritten into 
\begin{equation} \label{0910}
    \frac{\rd}{\rd t} \int_{\R^d} \varphi f \rd v = -\frac{1}{2} \iint_{\R^{2d}} \tilde{\nabla}\varphi \cdot \tilde{\nabla} \frac{\delta (C_{\gamma} H)}{\delta f} f f_{*} \rd v \rd v_* \,, ~ H = \int_{\R^d} f \log f \rd v \,,
\end{equation}
where $\tilde{\nabla} \cdot$ is the corresponding divergence operator in the distributional sense. More specifically, given a test function $\varphi(v) \in C_c^\infty(\R^d)$ and a vector-valued function $\psi=\psi(v,v_*) \in \R^d$, we have:
\begin{equation*}
    \iint_{\R^{2d}} \tilde{\nabla}\varphi(v,v_*) \cdot \psi(v,v_*) \rd v_* \rd v = - \int_{\R^d} \varphi(v) (\tilde{\nabla} \cdot \psi)(v)\rd v \,.
\end{equation*}
Equipped with this notation, one can rewrite the Landau equation \eqref{landau} as
\begin{equation}\label{landau3}
    \partial_t f = \frac{1}{2} \tilde{\nabla} \cdot \left(f f_{*} \tilde{\nabla} \frac{\delta (C_{\gamma} H)}{\delta f} \right) \,.
\end{equation}

Now drawing an analogy to the Wasserstein gradient flow perspective on the heat equation, \cite{carrillo2024landau} defines the following Landau metric between two probability densities $f_0$ and $f_1$ \footnote{In this paper, we do not distinguish between the notation for probability measures and densities.}, denoted as $d_L$:
\begin{equation} \label{dL}
    \begin{split}
        & d_L^2(f_0, f_1) :=~ \inf_{f, V} ~ \left\{ \int_0^1 \frac12 \iint_{\R^{2d}} |V|^2 ff_{*} \rd v \rd v_* \rd t \right\}, \\
        & s.t.~~ \partial_t f + \frac12 \tilde{\nabla} \cdot (V ff_{*}) = 0 \,, \ f(0, \cdot)=f_0 \,, \ f(1, \cdot)=f_1 \,.
    \end{split}
\end{equation}
As a result, the Landau equation \eqref{landau} can be viewed as the gradient flow of entropy $C_{\gamma} H$ with respect to the metric $d_L$. In particular, one can construct the weak solution of \eqref{landau} by De Giorgi's minimizing movement scheme \cite{Ambrosio2005GradientFI}, commonly known as the Jordan-Kinderlehrer-Otto (JKO) scheme \cite{jko}. Specifically, fix a time step $\Delta t > 0$, one recursively defines a sequence $\{f^n\}_{n=0}^{\infty}$ as
\begin{equation}\label{jko}
    f^0=f(0, \cdot) \,, ~ f^{n+1} \in \arginf_{f} \left[ d_L^2(f, f^n) + 2\Delta t C_{\gamma} H(f) \right] \,.
\end{equation}
The resulting solution will be a time-discrete approximation of \eqref{landau}, and with appropriate time interpolation, it will converge to the solution of \eqref{landau} as $\Delta t \rightarrow 0$ under certain conditions. More rigorous statements will be given in Section~\ref{sec-GF prospective}. 

Our numerical scheme is based on the formulations in \eqref{jko} and \eqref{dL}. However, although \eqref{dL} resembles the Benamou-Brenier dynamic formulation of the 2-Wasserstein metric \cite{Benamou2000}, it is not easily accessible from a computational standpoint. Therefore, we introduce the following alternative formulation of $d_L$:
\begin{equation}\label{dL-new}
    \begin{split}
        & d_L^2(f_0, f_1) :=~  \inf_{f, s} ~  \int_0^1 \frac{1}{2} \iint_{\R^{2d}} |s-s_*|^2_{A(v-v_*)} ff_{*} \rd v \rd v_* \rd t \,, \\ 
        & s.t. ~~ \partial_t f = \nabla \cdot \left[f \left( \int_{\R^{d}} A(v-v_*) (s-s_*)f_{*} \rd v_* \right)\right] \,,~ f(0, \cdot)=f_0 \,,~ f(1, \cdot)=f_1 \,,
    \end{split}
\end{equation}
such that the complex expression of $\tilde \nabla$ is explicitly spelled out. A more detailed relation between \eqref{dL} and \eqref{dL-new} will be outlined in Section~\ref{sec-GF prospective}. 

A key advantage of the new form \eqref{dL-new} is that both the objective function and the constrained PDE can be represented using particles. This is significant because particle methods, unlike grid-based methods, tend to scale better with dimensionality. Specifically, the objective function, which involves a double integral, can be interpreted as a double expectation with respect to the probability densities $f$ and $f_*$. This allows it to be approximated by the empirical sum of particles. Similarly, the constrained PDE can be viewed as a transport equation with an integral form of the velocity field, which can also be interpreted as an expectation and thus approximated using particles.

To update the particle velocities sequentially in time, we use the flow map representation, as previously adopted in \cite{LEE2024113187}. This approach has two favorable traits: First, it propagates both the particles and the density simultaneously, completely avoiding the challenges associated with density estimation, which is often a bottleneck in particle-based methods. Second, it transforms the constrained optimization problem \eqref{dL-new} into an unconstrained one, allowing optimization with respect to the flow map (or more specifically $s$ that generates the flow map) instead of both $f$ and $s$. 
This broadens the range of available optimization solvers and simplifies the optimization process.

In practice, $s$ is approximated by a neural network, which offers greater flexibility and is less sensitive to dimensionality compared to other approximations such as polynomials or Fourier series. Additionally, due to the particle representation in \eqref{dL-new}, the training process is well-suited to stochastic methods. Specifically, we design a tailored mini-batch stochastic gradient descent (SGD), inspired by the standard mini-batch SGD \cite{Bottou2018}. We also provide corresponding convergence analysis under common optimization theory assumptions. Once $s$ is trained, the particle update can be performed with improved efficiency using the random batch method \cite{Carrillo_Jin_Tang_2022}.

The rest of the paper is organized as follows. In the next section, we present key components in the design of our method. We begin with a review of the gradient flow perspective of the Landau equation and the dynamic formulation of the Landau metric. We then convert the Landau metric into a computable form and reformulate it using Lagrangian coordinates. This new metric, combined with the JKO scheme, forms the foundation of our variational approach. In Section \ref{sec3}, we detail our scheme, which includes a particle method and a neural network approximation. Since the objective function for training the neural network is in the form of a double summation, Section \ref{sec4} focuses on the stochastic optimization method and its convergence. Extensive numerical tests are presented in Section \ref{sec5}.

\section{Variational formulation and the JKO scheme}
This section outlines the foundational elements of our method. We first summarize the theoretical results from \cite{carrillo2024landau}, which establish the basis for viewing the Landau equation as a gradient flow. We also derive a more computationally friendly version of the Landau metric. This gradient flow perspective will then be translated into a dynamic JKO scheme, where we verify, via the optimality conditions, that the Landau equation is recovered to first order in time. Finally, we introduce a Lagrangian formulation in preparation for spatial discretization.

\subsection{Gradient flow perspective} \label{sec-GF prospective}
For simplicity, we assume the collision strength $C_\gamma = 1$ in this subsection. Let $\mathcal{P}_2(\R^d)$ be the space of probability measures with finite 2-moments:
$\mathcal{P}_{2}(\R^d) := \big\{ f: f \in \mathcal{P}(\R^d) \,, m_2(f) := \int_{\R^d} |v|^2 \rd f < \infty \big\}$. When the second moment is bounded by $E$, we denote this space by $\mathcal{P}_{2,E}(\R^d) \subset \mathcal{P}_2(\R^d)$. Following Villani's H-solutions \cite{Villani1998b}, the notion of a weak solution to \eqref{landau} is defined as:
\begin{definition}\cite[Definition 1]{carrillo2024landau}
    For $T>0$, we say that $f \in C([0,T]; L^1(\R^d))$ is a weak solution to the Landau equation \eqref{landau} if for every test function $\varphi \in C^\infty_c((0,T) \times \R^d)$, we have 
    $ \int_0^T \int_{\R^d} \partial_t \varphi f \rd v \rd t = \frac{1}{2} \int_0^T \iint_{\R^{2d}} \tilde{\nabla} \varphi \cdot \tilde{\nabla} \frac{\delta H}{\delta f}  f f_{*} \rd v \rd v_* \rd t$, and $f$ satisfies the following properties: 
    \begin{enumerate}[1.]
        \item $f$ is a probability measure with uniformly bounded second moment;
        \item The entropy $H(f) := \int_{\R^d} f \log f \rd v$ is bounded by its initial value: 
        $H(f(t, \cdot)) \leq H(f(0, \cdot)) < +\infty$ for all $t \in [0,T]$;
        \item The entropy dissipation is time integrable:
        \begin{equation}\label{entropy_diss}
            D_{H}(f) := \frac{1}{2} \iint_{\R^{2d}} \left| \tilde{\nabla} \frac{\delta H}{\delta f} \right|^2 f f_* \rd v \rd v_* \in L^1_t([0,T]) \,,
        \end{equation}
    \end{enumerate}
    where operator $\tilde{\nabla}$ is defined in \eqref{new_grad}.
\end{definition}
One main theorem in \cite{carrillo2024landau} establishes the equivalence between the gradient flow solution (defined as the curves of maximal slope, see Definition~\ref{def-maxslope}) and the weak solution of the Landau equation.

\begin{theorem}[Landau equation as a gradient flow]\cite[Theorem 12]{carrillo2024landau} 
    Fix $d=3$ and $\gamma \in (-3,0]$. Suppose that a curve $\mu: [0,T] \to \mathcal{P}(\R^3)$ has a density $f(t, v)$ that satisfies the following assumptions:
    \begin{enumerate}[1.]
        \item For $\kappa \in (0, \gamma+3]$, we have 
        $\langle v\rangle^{2-\gamma} f(t, v) \in L_t^{\infty}(0,T; L_v^1 \cap L_v^{(3-\kappa)/(3+\gamma-\kappa)}(\R^3))$, where $\langle v \rangle^2 = 1+|v|^2$;
        \item The initial entropy is finite: $H(f(0, \cdot)) = \int_{\R^3} f(0, v) \log f(0, v) \rd v < \infty$;
        \item The entropy-dissipation is time integrable, i.e., $f$ satisfies \eqref{entropy_diss}.
    \end{enumerate}
    Then $\mu$ is a curve of maximal slope for $H$ with respect to its strong upper gradient $\sqrt{D_{H}}$ if and only if its density $f$ is a weak solution of the homogeneous Landau equation \eqref{landau}. Moreover, we have the following energy dissipation equality:
    \begin{equation*}
        H(f(t, \cdot)) + \frac12 \int_0^t |f'|^2_{d_L}(s) \rd s + \frac12 \int_0^t D_{H}(f(s, \cdot)) ds = H(f(0, \cdot)) \,,
    \end{equation*}
    where $|f'|_{d_L}$ is the metric derivative of curve $f$ with metric $d_L$  defined in \eqref{dL}. 
\end{theorem}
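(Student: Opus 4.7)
The plan is to invoke the Ambrosio-Gigli-Savaré equivalence between curves of maximal slope and gradient-flow PDEs, tailored to the metric $d_L$. I would first set up three structural ingredients adapted to the Landau setting: (a) identification of the metric derivative $|f'|_{d_L}(t)^2$ as the minimum of $\frac{1}{2}\iint_{\R^{2d}} |V|^2 f f_* \rd\bv \rd\bv_*$ over velocities $V$ driving the continuity equation in \eqref{dL}; (b) a chain rule $\frac{\rd}{\rd t}\mathcal{H}(f(t)) = \frac{1}{2}\iint_{\R^{2d}} V \cdot \Tilde{\nabla}\frac{\delta \mathcal{H}}{\delta f}\, f f_* \rd\bv \rd\bv_*$ along any absolutely continuous curve admitted by (a); and (c) the verification that $\sqrt{D_\mathcal{H}}$ is a strong upper gradient of $\mathcal{H}$, which then follows from (a)-(b) by Cauchy-Schwarz.

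With (a)-(c) in hand, the proof reduces to a soft Young-inequality argument. Integrating (b), applying Cauchy-Schwarz, and then Young's inequality produces the one-sided EDI, while the reverse inequality is built into the definition of a curve of maximal slope; hence $f$ is a curve of maximal slope iff the EDI holds with equality. Equality forces simultaneously that $|f'|_{d_L} = \sqrt{D_\mathcal{H}}$ a.e.\ (Young saturation) and that the minimizing velocity in (a) is collinear with $\Tilde{\nabla}\frac{\delta \mathcal{H}}{\delta f}$ (Cauchy-Schwarz saturation). Together with (a)-(b), this uniquely identifies $V = \Tilde{\nabla}\frac{\delta \mathcal{H}}{\delta f}$ in the continuity equation, which is precisely the weak formulation \eqref{landau:weak3}. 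The converse direction is easier: a weak Landau solution already has the correct velocity in its continuity equation, and testing the weak form against $\varphi = \log f$ (suitably regularized) yields $\frac{\rd}{\rd t}\mathcal{H} = -D_\mathcal{H}$, from which EDI equality is immediate.

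The main obstacle will be making the chain rule (b) rigorous under the stated low-regularity hypothesis. The moment weight $\langle\bv\rangle^{2-\gamma}$ in assumption (i) is calibrated precisely so that the cross-term $\iint A(\bv-\bv_*)(\nabla\log f - \nabla_*\log f_*)ff_*$ is finite by a H\"older argument combined with (iii), and so that the continuity equation admits a distributional sense for the relevant class of test functions. A mollification argument should then deliver (b): prove the identity for smooth approximants, for which all manipulations are classical, and pass to the limit using (i), (iii), and lower-semicontinuity of the associated quadratic forms. The dimension restriction $d=3$ and the range $\gamma \in (-3,0]$ enter by ensuring the singular kernel $|\bv-\bv_*|^{\gamma+2}$ is locally integrable and that moment interpolation places all cross-terms in $L^1_t$. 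Once (b) is secured, both directions of the equivalence and the energy dissipation equality drop out as described above.
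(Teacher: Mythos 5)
This theorem is imported verbatim from \cite{carrillo2024landau} (their Theorem 12); the present paper gives no proof of it, so the only meaningful comparison is with the argument in that reference. Your outline is structurally the same as theirs: the Ambrosio--Gigli--Savar\'e skeleton of (a) identifying the metric derivative with the minimal action velocity, (b) a chain rule for $\mathcal{H}$ along absolutely continuous curves, (c) the strong-upper-gradient property of $\sqrt{D_{\mathcal{H}}}$, followed by the Cauchy--Schwarz/Young saturation argument that turns the energy dissipation inequality into an equality and identifies $V=-\Tilde{\nabla}\frac{\delta\mathcal{H}}{\delta f}$ (note the sign forced by the form of the constraint in \eqref{dL} versus \eqref{landau:weak3}). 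So the route is the right one.

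Where your sketch undersells the difficulty is ingredient (a), which you describe as ``setup'' but which is the core technical content of the reference: one must prove a Benamou--Brenier-type theorem for $d_L$, i.e.\ that absolutely continuous curves in $(\mathcal{P}_{2,E},d_L)$ are exactly those solving the grazing continuity equation with some $V\in L^2(ff_*)$, that the metric derivative equals the minimal such norm, and that the optimal $V$ lies in the closure of $\{\Tilde{\nabla}\varphi\}$. Without that last tangent-space characterization your ``Cauchy--Schwarz saturation'' only pins down the projection of $V$ onto gradient-type fields, not $V$ itself, so the identification with the weak Landau equation would be incomplete. The other delicate point is exactly the one you flag: the chain rule (b) under the low-regularity hypothesis (i), which in the reference is where the weight $\langle\bv\rangle^{2-\gamma}$ and the integrability exponent $(3-\kappa)/(3+\gamma-\kappa)$ are consumed. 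As a blueprint your proposal is faithful to the actual proof; as a proof it defers the two hardest lemmas.
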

For foundational theory on gradient flows, one can refer to \cite[Chapter 1]{Ambrosio2005GradientFI} and consult Appendix~\ref{app_def} for basic definitions. The key distinction here lies in the definition of the metric, which has been shown in \cite[Theorem 7]{carrillo2024landau} to provide a meaningful topology on $\mathcal{P}_{2, E}$. As mentioned earlier, $d_L$ in its original form is not easy to compute. Therefore, we derive an alternative form of the Landau metric. 

Recall the weak form \eqref{landau2} of the Landau equation. If we choose $\varphi = \log f(v) = \frac{\delta H}{\delta f}$ with $H$ given in \eqref{0910}, we get entropy decay with the decay rate $-\frac{1}{2}\iint_{\R^{2d}} (S - S_*) \cdot A(v - v_*)(S - S_*) ff_* \rd v\rd v_*$, where $S(v):= \nabla_v \log f(v)$. This motivates the definition of an action functional 
\begin{equation} \label{0911}
    \frac{1}{2}\iint_{\R^{2d}} (s-s_*) \cdot A(v-v_*)(s-s_*) f f_* \rd v\rd v_* \,, ~ \text{where}~~ s = s(v), ~s_* = s(v_*)\,.
\end{equation}
Following the minimizing action principle, one can minimize \eqref{0911} over the curves that satisfy an appropriate continuity equation to derive the Landau metric. To construct such a continuity equation, we interpret the Landau equation as a continuity equation with an integral velocity field,  i.e., 
\begin{equation} \label{0912}
    \partial_t f = \nabla \cdot \left[f \left( \int_{\R^{d}} A(v-v_*) (s-s_*)f_{*} \rd v_* \right)\right]\,.
\end{equation}
The combination of \eqref{0911} and \eqref{0912} then gives rise to the Landau metric \eqref{dL-new}. 

A more direct relation between \eqref{dL} and \eqref{dL-new} is to observe that, by comparing the constrained equation in \eqref{dL} with the Landau equation \eqref{landau3}, we see that $V$ corresponds to $\tilde \nabla \frac{\delta H}{\delta f}$. Therefore, we can assume $V$ admits the following form: 
\begin{equation*}
    V = \tilde{\nabla}\phi = |v-v_*|^{1+\gamma/2} \Pi(v-v_*)(\nabla\phi - \nabla_*\phi_*) \,,
\end{equation*}
for some function $\phi=\phi(t, v)$. This is analogous to the fact that the optimal vector field in the 2-Wasserstein metric is the gradient of some potential function \cite{villani2003topics}. Denote $s := \nabla\phi$ and $|s-s_*|^2_{A(v-v_*)} := (s-s_*) \cdot A(v-v_*) (s-s_*)$. Then \eqref{dL-new} directly follows from \eqref{dL}.

\subsection{Dynamic JKO scheme}
With the definition of the Landau metric established, we can now formulate the dynamic JKO scheme. This approach was first explored as a viable numerical method for Wasserstein gradient flow in \cite{Carrillo2022primaldual} and subsequently in \cite{Carrillo2024primaldual} for more general metrics. 
\begin{problem}[dynamic JKO scheme]
Given $f^n$, solve $f^{n+1} := f(1, \cdot)$ by
\begin{equation}\label{djko}
    \left \{
    \begin{aligned}
        & \inf_{f,s} ~ \int_0^1 \frac12 \iint_{\R^{2d}} |s-s_*|^2_{A(v-v_*)} f f_{*} \rd v \rd v_* \rd t + 2\Delta t C_{\gamma} H(f(1, \cdot)) \,, \\
        & s.t.~~
        \partial_t f = \nabla \cdot \left[f \left(\int_{\R^d} A(v-v_*) (s-s_*) f_{*} \rd v_* \right)\right] \,,\ f(0, \cdot) = f^n \,.
    \end{aligned}
    \right.
\end{equation}
\end{problem}
As with the vanilla JKO scheme, \eqref{djko} enjoys a structure-preserving property.
\begin{proposition} \label{prop2}
    Formulation \eqref{djko} has the following properties for any $n \geq 0$:
    \begin{enumerate}[(1.)]
        \item Entropy dissipation: $H(f^{n+1}) \leq H(f^n)$.
        \item Mass, momentum, and energy conservation: 
        \[
        \int_{\R^d} \varphi(v) f^{n+1}(v) \rd v = \int_{\R^d} \varphi(v) f^n(v) \rd v, ~\text{for}~  \varphi(v) = 1\,, v\,, |v|^2\,.
        \]  
    \end{enumerate}
\end{proposition}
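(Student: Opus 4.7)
For part (i), I would use a direct competitor argument. Observe that the pair $(f,\bu)$ with $f(t,\cdot)\equiv f^n$ for all $t\in[0,1]$ and $\bu\equiv 0$ is admissible for the constraint in \eqref{djko}: the right-hand side of the continuity equation vanishes identically, so $\partial_t f = 0$, and the initial condition $f(0,\cdot)=f^n$ is satisfied. Evaluating the objective on this competitor yields value $2\tau \mathcal H(f^n)$, since the action term is zero. Because the action $\tfrac12\int_0^1\!\iint|\bu-\bu_*|_A^2 ff_*\,\rd\bv\,\rd\bv_*\,\rd t$ is nonnegative (the matrix $A$ is positive semi-definite) and $(f^{n+1},\bu^{n+1})$ is a minimizer, we immediately get $2\tau\mathcal H(f^{n+1})\leq 2\tau\mathcal H(f^n)$, which is the claimed entropy dissipation.

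For part (ii), the plan is to test the constraint PDE against $\varphi(\bv)\in\{1,\bv,|\bv|^2\}$, integrate by parts in $\bv$, and symmetrize via the change of variables $\bv\leftrightarrow \bv_*$. Using $A(-\bz)=A(\bz)$ together with the swap produces
\begin{equation*}
    \frac{\rd}{\rd t}\int_{\R^d}\varphi f\,\rd\bv = -\frac12\iint_{\R^{2d}}(\nabla\varphi-\nabla_*\varphi_*)\cdot A(\bv-\bv_*)(\bu-\bu_*)\,ff_*\,\rd\bv\,\rd\bv_*\,.
\end{equation*}
Now I would check each test function. For $\varphi=1$ and $\varphi=v_i$, the combination $\nabla\varphi-\nabla_*\varphi_*$ vanishes, so mass and momentum are preserved. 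For $\varphi=|\bv|^2$, we get $\nabla\varphi-\nabla_*\varphi_*=2(\bv-\bv_*)$, and the integrand then vanishes by the key identity $A(\bv-\bv_*)(\bv-\bv_*)=0$, which follows directly from the definition $A(\bz)=C_\gamma|\bz|^{\gamma+2}\Pi(\bz)$ and $\Pi(\bz)\bz=0$. Hence $\tfrac{\rd}{\rd t}\int\varphi f\,\rd\bv\equiv 0$ along the optimal curve, and integrating from $t=0$ to $t=1$ transfers the conservation from $f^n$ to $f^{n+1}$.

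There is no deep obstacle, but two mild bookkeeping points deserve care. First, for (i) the competitor argument presumes $\mathcal H(f^n)<\infty$, which propagates inductively from the finite initial entropy assumed at $f^0$. Second, the symmetrization and integration by parts in (ii) require enough decay of $ff_*|\bu-\bu_*|$ in $\bv,\bv_*$ to discard boundary contributions; for the quadratic test $\varphi=|\bv|^2$ in particular one needs the weighted integrability of $|\bv|\,|A(\bv-\bv_*)(\bu-\bu_*)|ff_*$. These are precisely the regularity/moment conditions already built into the $\mathcal P_{2,E}$ setting and the curve-of-maximal-slope assumptions quoted earlier, so the argument is rigorous within the framework established in Section~\ref{sec-GF prospective}.
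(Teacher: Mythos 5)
Your proof is correct and takes essentially the same route as the paper: part (i) is the zero-velocity competitor argument (which the paper compresses to ``a direct consequence of the fact that $f^{n+1}$ is a minimizer''), and part (ii) is the same test-function/symmetrization computation, using $\nabla\varphi-\nabla_*\varphi_*\in\{0,\,2(\bv-\bv_*)\}$ and $A(\bz)\bz=0$. Your additional bookkeeping remarks (finiteness of $\mathcal{H}(f^n)$ for the competitor, and the decay needed to discard boundary terms in the integration by parts) are sensible refinements the paper leaves implicit.
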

\begin{proof}
    Property 1 is a direct consequence of the fact that $f^{n+1}$ is a minimizer. 
    Property 2 is guaranteed by the constraint PDE: ~for $\varphi(v) = 1\,, v\,, |v|^2$,
    \begin{flalign*}
        \frac{\rd}{\rd t} \int_{\R^d} f \varphi \rd v 
        & = \int_{\R^d} \varphi \nabla \cdot \left[f \left(\int_{\R^d} A(v-v_*) (s-s_*) f_{*} \rd v_* \right)\right] \rd v \\
        & = -\iint_{\R^{2d}} \nabla\varphi  \cdot A(v-v_*) (s-s_*) ff_{*} \rd v\rd v_* \\
        & = -\frac{1}{2} \iint_{\R^{2d}} (\nabla\varphi - \nabla_*\varphi_*) \cdot A(v-v_*) (s-s_*) f f_{*} \rd v \rd v_* =0\,.
    \end{flalign*}
\end{proof}

We now demonstrate that \eqref{djko} indeed provides a first-order approximation in $\Delta t$ to \eqref{landau}. Let $\lambda(t, v)$ be the Lagrangian multiplier, the Lagrangian associated with  \eqref{djko} is given by:
\begin{flalign*}
    \mathcal{L}(f, s, \lambda) 
    = & ~\int_0^1 \frac12 \iint_{\R^{2d}} (s-s_*) \cdot A(v-v_*)(s-s_*)ff_* \rd v \rd v_* \rd t + 2 \Delta t C_\gamma H(f(1, \cdot)) \\
    & + \int_0^1 \int_{\R^d} \lambda \left[ \partial_t f - \nabla \cdot \left(f \int_{\R^d} A(v-v_*) (s-s_*)f_* \rd v_* \right)\right] \rd v \rd t \\
    = & ~\int_0^1 \frac12 \iint_{\R^{2d}} (s-s_* + \nabla\lambda - \nabla_*\lambda_*) \cdot A(v-v_*)(s-s_*)ff_* \rd v \rd v_* \rd t \\
    & - \int_0^1 \int_{\R^d} f \partial_t \lambda \rd v \rd t + \int_{\R^d} f \lambda |_{t=0}^{t=1} \rd v + 2 \Delta t C_\gamma H(f(1, \cdot)) \,.
\end{flalign*}
By definition, the variation of $\mathcal{L}$ with respect to $s$ is calculated via
\begin{flalign*}
    \int_0^1 \int_{\R^d} \eta \frac{\delta \mathcal{L}}{\delta s} \rd v \rd t
    & = \lim_{\varepsilon \to 0} \frac{\mathcal{L}(f, s + \varepsilon \eta, \lambda) - \mathcal{L}(f, s, \lambda)}{\varepsilon} \\
    & = \int_0^1 \frac12 \iint_{\R^{2d}} (2s-2s_* + \nabla\lambda - \nabla_*\lambda_*) \cdot A(v-v_*)(\eta-\eta_*)ff_* \rd v \rd v_* \rd t \\
    & = \int_0^1 \int_{\R^d} \eta \left[f \left( \int_{\R^d} A(v-v_*) (2s-2s_* + \nabla\lambda - \nabla_*\lambda_*) f_* \rd v_* \right)\right] \rd v \rd t \,.
\end{flalign*}
Therefore, the first-order optimality condition is written as:
\begin{equation}\label{opt_u}
    \frac{\delta \mathcal{L}}{\delta s} = f \left(\int_{\R^d} A(v-v_*) (2s-2s_* + \nabla\lambda - \nabla_*\lambda_*)f_* \rd v_* \right) = 0 \,.
\end{equation}
Likewise, we have the other two optimality conditions: 
\begin{flalign}
    \frac{\delta \mathcal{L}}{\delta f} &= -\partial_t \lambda + \int_{\R^d} (s-s_* + \nabla\lambda - \nabla_*\lambda_*) \cdot A(v-v_*)(s-s_*) f_* \rd v_* = 0 \,, \label{opt_rho} \\
    \frac{\delta \mathcal{L}}{\delta f(1, \cdot)} & = \lambda(1, \cdot) + 2\Delta t C_\gamma \frac{\delta H(f(1, \cdot))}{\delta f(1, \cdot)} = \lambda(1, \cdot) + 2\Delta t C_\gamma \log f(1, \cdot) = 0 \label{opt_rho1} \,.
\end{flalign}

\begin{lemma}\label{lemma_1}
    If $\varphi(v) : \R^d \to \R^d$ satisfies $\int_{\R^d} A(v-v_*) (\varphi -\varphi_*)f_* \rd v_* = 0$, then $\varphi \in \operatorname{span}\{ \mathbf{1}, v\}$ on the support of $f$.
\end{lemma}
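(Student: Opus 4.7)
My plan is to convert the vector-valued integral hypothesis into a scalar quadratic identity via symmetrization, then exploit the explicit kernel structure of $A$ to obtain pointwise collinearity of $\varphi(\bv) - \varphi(\bv_*)$ with $\bv - \bv_*$, and finally deduce affinity by a three-point argument. Concretely, taking the dot product of the hypothesis with $\varphi(\bv) f(\bv)$ and integrating in $\bv$ yields
\begin{equation*}
\iint_{\R^{2d}} \varphi \cdot A(\bv-\bv_*)(\varphi-\varphi_*) f f_* \rd\bv \rd\bv_* = 0;
\end{equation*}
swapping $\bv \leftrightarrow \bv_*$ (using $A(-\bz)=A(\bz)$) and adding the two versions produces the symmetric quadratic form
\begin{equation*}
\iint_{\R^{2d}} (\varphi-\varphi_*) \cdot A(\bv-\bv_*)(\varphi-\varphi_*) f f_* \rd\bv \rd\bv_* = 0.
\end{equation*}
Since $A(\bz) = C_\gamma |\bz|^{\gamma+2}\Pi(\bz)$ is positive semidefinite with nullspace exactly $\operatorname{span}\{\bz\}$ for $\bz \neq 0$, the integrand is pointwise non-negative, hence vanishes $f \otimes f$-a.e. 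This forces $\Pi(\bv-\bv_*)(\varphi(\bv)-\varphi(\bv_*)) = 0$, i.e.\ $\varphi(\bv) - \varphi(\bv_*) = \lambda(\bv,\bv_*)(\bv - \bv_*)$ for some scalar $\lambda$, valid for $f\otimes f$-a.e.\ pair.

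To promote this pairwise collinearity to a global affine structure, I pick three non-collinear points $\bv_1, \bv_2, \bv_3 \in \operatorname{supp}(f)$, which is possible in the generic situation where $\operatorname{supp}(f)$ is not contained in an affine line. Writing $\varphi(\bv_i) - \varphi(\bv_j) = \lambda_{ij}(\bv_i - \bv_j)$ and using additivity along the triangle $\bv_1 \to \bv_2 \to \bv_3$, one gets
\begin{equation*}
(\lambda_{13} - \lambda_{12})(\bv_1 - \bv_3) = (\lambda_{23} - \lambda_{12})(\bv_2 - \bv_3).
\end{equation*}
Linear independence of $\bv_1 - \bv_3$ and $\bv_2 - \bv_3$ forces $\lambda_{12} = \lambda_{13} = \lambda_{23} =: c$. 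Varying the triple shows $\lambda$ is globally constant on $\operatorname{supp}(f)\times\operatorname{supp}(f)$; fixing any reference $\bv_0$ and setting $a := \varphi(\bv_0) - c \bv_0$ gives $\varphi(\bv) = a + c\bv$, which is exactly $\varphi \in \operatorname{span}\{\mathbf{1}, \bv\}$.

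The main subtlety is the passage from the "$f \otimes f$-a.e." conclusion of the first step to the pointwise collinearity needed to carry out the triangle argument. If $\varphi$ is continuous, the a.e.\ identity extends by continuity to all of $\operatorname{supp}(f)\times\operatorname{supp}(f)$; without continuity, the conclusion should be read as producing constants $a,c$ such that $\varphi(\bv) = a + c\bv$ holds $f$-a.e., which is all that is needed in the downstream KKT analysis of \eqref{djko} since $\varphi$ there is always integrated against $f$. Genuinely degenerate cases in which $\operatorname{supp}(f)$ sits inside a lower-dimensional affine subspace require only a routine restriction of the triangle argument to that subspace, and the interesting content of the lemma lies entirely in the non-degenerate case above.
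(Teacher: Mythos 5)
Your proposal is correct and follows essentially the same route as the paper's proof: integrate the hypothesis against $\varphi f\,\rd\bv$, symmetrize in $\bv\leftrightarrow\bv_*$ to obtain the vanishing of the nonnegative quadratic form, and use positive semidefiniteness of $A$ (with null space $\operatorname{span}\{\bv-\bv_*\}$) to get $A(\bv-\bv_*)(\varphi-\varphi_*)=0$ on $\operatorname{supp}(f\otimes f_*)$. The only difference is that you additionally carry out the three-point collinearity argument to justify the final implication $\varphi\in\operatorname{span}\{\mathbf{1},\bv\}$, which the paper asserts without detail; that added step (and your caveats about a.e.\ validity and degenerate supports) is sound.
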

\begin{proof}
    Integrating this equality against $f \rd v$, we have 
    \begin{equation*}
        0 \!=\! \iint_{\R^{2d}} \varphi \cdot A(v - v_*) (\varphi -\varphi_*)f_*f \rd v_* \rd v
        \!=\! \frac{1}{2} \iint_{\R^{2d}} (\varphi -\varphi_*) \cdot A(v-v_*) (\varphi -\varphi_*)f_*f \rd v_* \rd v \,.
    \end{equation*}
    Since $A$ is semi-positive definite, this implies $A(v-v_*)(\varphi -\varphi_*) = 0$ on the support of $f_* f$. Therefore, $\varphi \in \operatorname{span}\{ \mathbf{1}, v\}$ on the support of $f$.
\end{proof}

By Lemma \ref{lemma_1}, \eqref{opt_u} implies that $2s + \nabla\lambda \in \operatorname{span}\{ \mathbf{1}, v\}$ on $\operatorname{supp}(f)$. Combined with \eqref{opt_rho1}, this shows that 
\begin{equation}\label{opt_u1}
    s(1, v) \in \Delta t C_\gamma \nabla\log f(1, v) + \operatorname{span}\{ \mathbf{1}, v\} \, ~~\text{on}~~ \operatorname{supp}(f)\,.
\end{equation}
Substituting 
\eqref{opt_u1} into the constraint PDE in \eqref{djko} reveals that $f^{n+1} = f(1, \cdot)$ is indeed the solution of the homogeneous Landau equation \eqref{landau} after one time step $\Delta t$.
Additionally, \eqref{opt_rho} shows that $\lambda$ satisfies the following equation
\begin{equation*}
    \partial_t \lambda + \frac{1}{4} \int_{\R^d} |\nabla\lambda - \nabla_*\lambda_*|^2_{A(v-v_*)} f_* \rd v_* = 0 \,.
\end{equation*}
This can be viewed as a generalization of the Hamilton-Jacobi equation that the dual variable must satisfy in the 2-Wasserstein metric.

\subsection{Lagrangian formulation}
To facilitate the particle method in the next section, we translate the variational problem \eqref{djko} into the corresponding Lagrangian formulation, following \cite{CARRILLO2021271, LEE2024113187}. Assuming the velocity field is sufficiently regular, the solution $f(t, v)$ to the constrained continuity equation can be represented as
\begin{equation*}
    f(t, \cdot) = \T_t \# f^n \,,\ t \in [0,1] \,,
\end{equation*}
where $\T_t$ is the flow map that solves the following ODE:
\begin{equation}\label{flowmap}
    \frac{\rd}{\rd t} \T_t(v) = - \int_{\R^d} A(\T_t(v)-\T_t(v_*)) [s(t, \T_t(v)) - s(t, \T_t(v_*))] f^n_* \rd v_* \,, 
\end{equation}
with $\T_0(v)=v $.
The entropy term in \eqref{djko} can also be represented using map $\T_t$: 
\begin{flalign*}
    H({\T_1}\# f^n) 
    & = H(f(1, \cdot))= \int_{\R^d} f(1, v) \log f(1, v) \rd v \\
    & = \int_{\R^d} f(1, \T_1(v)) \log f(1, \T_1(v))|\det\nabla_v \T_1(v)| \rd v \\
    & = \int_{\R^d} f^n(v)\log\frac{f^n(v)}{|\det\nabla_v \T_1(v)|} \rd v \\
    & = \int_{\R^d} f^n(v)\log{f^n(v)} \rd v - \int_{\R^d} f^n(v)\log|\det\nabla_v \T_1(v)| \rd v \,,
\end{flalign*}
where we have used the change of variable formula $  f(1, \T_1(v)) |\det\nabla_v \T_1(v)| = f^n(v)$. In practice, $\int_{\R^d} f^n(v)\log{f^n(v)} \rd v$ is dropped in optimization since it is independent of $s$. To efficiently compute $|\det\nabla_v \T_1(v)|$, we cite a formula in \cite{HUANG2025114053} on the evolution of the log determinant, inspired by the concept of continuous normalizing flow \cite{neuralode, grathwohl2018scalable, LEE2024113187}.

\begin{proposition}\cite[Proposition 4]{HUANG2025114053}\label{prop:logdet}
Assuming the flow map $\T_t$ in \eqref{flowmap} is invertible, then the log determinant of its Jacobian satisfies the following equation:
\begin{flalign*}
    \frac{\rd}{\rd t} \log |\det\nabla_{v} \T_t(v)|
    & = - \int_{\R^d} \nabla_{\T_t(v)} \cdot \{A(\T_t(v)-\T_t(v_*)) [s(t, \T_t(v)) - s(t, \T_t(v_*))] \} f^n_* \rd v_* \\
    & = - \int_{\R^d} \big\{ A(\T_t(v)-\T_t(v_*)) : \nabla_{\T_t(v)} s(t, \T_t(v)) - (d-1) \\
    & |\T_t(v)-\T_t(v_*)|^{\gamma} (\T_t(v)-\T_t(v_*)) \cdot [s(t, \T_t(v)) - s(t, \T_t(v_*))] \big\} f^n_* \rd v_* \,,
\end{flalign*}
where $A:B := \sum_{ij}A_{ij}B_{ij}$.
\end{proposition}

As a result, we propose the following Lagrangian dynamic JKO scheme. 
\begin{problem}[dynamic JKO scheme in Lagrangian formulation]
Given $f^n$, find $f^{n+1} := \T_1^{n+1} \# f^n$, where $\T_t^{n+1}$ is the optimal flow map solving
\begin{equation}\label{djko2}
    \left \{
    \begin{aligned}
        & \inf_{s} \int_0^1 \frac12 \! \iint_{\R^{2d}} |s(t, \T_t(v)) \!-\! s(t, \T_t(v_*))|^2_{A(\T_t(v) - \T_t(v_*))} f^n f^n_* \rd v \rd v_* \rd t \!-\!
        2\Delta t C_{\gamma} \! \int_{\R^d} \log |\det\nabla_v \T_1(v)| f^n \rd v , \\
        & s.t.~~ \frac{\rd}{\rd t} \T_t(v) = - \int_{\R^d} A(\T_t(v) - \T_t(v_*)) [s(t, \T_t(v)) - s(t, \T_t(v_*))] f^n_* \rd v_* \,, \\
        & \frac{\rd}{\rd t} \log |\det\nabla_v \T_t(v)| 
        = - \int_{\R^d} \nabla_{\T_t(v)} \cdot \{A(\T_t(v) - \T_t(v_*))[s(t, \T_t(v)) - s(t, \T_t(v_*))] \} f^n_* \rd v_* \,, \\
        & \T_0(v)=v \,,~ \log|\det\nabla_v \T_0(v)|=0 \,.
    \end{aligned}
    \right.
\end{equation}
\end{problem}
The formulation \eqref{djko2} is equivalent to \eqref{djko} and therefore preserves all the properties stated in Proposition~\ref{prop2}.

\section{A particle method}\label{sec3}
In this section, we derive a fully implementable version of \eqref{djko2} by first discretizing $v$ using particles and approximating the to-be-optimized vector field $s(t, v)$ with the neural network.

\subsection{Particle representation}
By interpreting the integrals against $f^n\rd v$ and $f^n_*\rd v_*$ as expectations, \eqref{djko2} reveals a particle representation. More precisely, let $\{ v_i^n \}_{i=1}^{N}$ be the velocity of $N$ particles sampled from $f^n$, we discretize \eqref{djko2} as follows.

\begin{problem}[dynamic JKO scheme using particles]
Given particles $\{v_i^n\}_{i=1}^N$, find $v_i^{n+1} := \T_1^{n+1}(v_i^n)$ and $f^{n+1}(v_i^{n+1}):=f^n(v_i^n) / |\det \nabla_v \T_1^{n+1}(v_i^n)|$, where $\T_t^{n+1}$ is the optimal flow map  solving
\begin{equation}\label{pjko}
\left \{
    \begin{aligned}
        & \inf_{s} ~ \int_0^1 \frac{1}{2N^2} \sum_{i=1}^{N}\sum_{j=1}^{N} |s(t, \T_t(v_i^n)) - s(t, \T_t(v_j^n))|^2_{A(\T_t(v_i^n) - \T_t(v_j^n))} \rd t - 2\Delta t C_{\gamma} \frac{1}{N} \sum_{i=1}^{N} \log |\det\nabla_v \T_1(v_i^n)| \,, \\
        & \text{s.t.}~~ \frac{\rd}{\rd t} \T_t(v_i^n) = -\frac1N \sum_{j=1}^N A(\T_t(v_i^n) - \T_t(v_j^n)) [s(t, \T_t(v_i^n)) - s(t, \T_t(v_j^n))] \,, \\
        & \frac{\rd}{\rd t} \log |\det\nabla_v \T_t(v_i^n)| = - \frac1N \sum_{j=1}^{N} \nabla_{\T_t(v_i^n)} \cdot \{A(\T_t(v_i^n) - \T_t(v_j^n))[s(t, \T_t(v_i^n)) - s(t, \T_t(v_j^n))] \} \,, \\
        & \T_0(v_i^n)=v_i^n \,,~ \log |\det \nabla_v \T_0(v_i^n)|=0 \,. 
    \end{aligned}
    \right.
\end{equation}
\end{problem}

The particle formulation \eqref{pjko} immediately has the following favorable properties.
\begin{proposition}\label{prop_31}
    The particle-based variational formulation \eqref{pjko} has the following properties for any $n \geq 0$:
    \begin{enumerate}[(1.)]
        \item Discrete entropy dissipation: define $H^N(f^{n}) := \frac{1}{N} \sum_{i=1}^N \log f^n(v_i^{n})$, then $H^N(f^{n+1}) \leq H^N(f^{n})$.
        \item Discrete mass, momentum, and energy conservation: 
        \[
        \frac{1}{N} \sum_{i=1}^N \varphi(v_i^{n+1}) = \frac{1}{N} \sum_{i=1}^N \varphi(v_i^n) \quad\text{for}\quad  \varphi(v) = 1\,, v\,, |v|^2 \,.
        \]
    \end{enumerate}
\end{proposition}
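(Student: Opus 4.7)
The plan is to handle the two parts independently: (ii) follows by differentiating the particle moments along the flow and exploiting the null-space identity $A(\bz)\bz = 0$ of the collision kernel, while (i) combines a change-of-variables identity for the pushforward density with the variational bound obtained by using $\bu \equiv 0$ as a competitor in the optimization.

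For (ii), set $M_\varphi(t) := \frac{1}{N}\sum_{i=1}^N \varphi(T_t(\bv_i^n))$. Differentiating in $t$, substituting the constraint ODE from \eqref{djko_p}, and symmetrizing the $i \leftrightarrow j$ indices (using $A(-\bz) = A(\bz)$) yields
\begin{equation*}
\frac{d}{dt} M_\varphi(t) = -\frac{1}{2N^2}\sum_{i,j} [\nabla\varphi(T_t(\bv_i^n)) - \nabla\varphi(T_t(\bv_j^n))] \cdot A(T_t(\bv_i^n)-T_t(\bv_j^n))[\bu(t,T_t(\bv_i^n))-\bu(t,T_t(\bv_j^n))] \,.
\end{equation*}
For $\varphi \equiv 1$ both gradients are zero, and for $\varphi(\bv)=v_k$ both are the constant $e_k$, so in each case the first bracketed factor vanishes. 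For $\varphi(\bv)=|\bv|^2$ the gradient is $2\bv$, making the first bracketed factor $2[T_t(\bv_i^n) - T_t(\bv_j^n)]$, which lies in the null space of $A(T_t(\bv_i^n)-T_t(\bv_j^n))$ because $A(\bz)\bz = C_\gamma|\bz|^{\gamma+2}\Pi(\bz)\bz = 0$. In all three cases $M_\varphi$ is constant on $[0,1]$, which gives the conservation laws at $t=1$.

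For (i), the pushforward relation $f^{n+1} = {T^{n+1}_1}_\# f^n$ combined with the change-of-variables formula gives, at $\bv_i^{n+1} = T^{n+1}_1(\bv_i^n)$,
\begin{equation*}
\log f^{n+1}(\bv_i^{n+1}) = \log f^n(\bv_i^n) - \log|\det\nabla_\bv T^{n+1}_1(\bv_i^n)| \,.
\end{equation*}
Averaging over $i$, the claimed inequality reduces to $\tfrac{1}{N}\sum_i \log|\det\nabla_\bv T^{n+1}_1(\bv_i^n)| \geq 0$. To obtain this, observe that $\bu \equiv 0$ is admissible and produces $T_t \equiv \mathrm{Id}$, for which both terms of the objective in \eqref{djko_p} are zero; hence the minimal objective value is at most zero, and since the kinetic term is non-negative (by the semi-positive-definiteness of $A$), the log-determinant term must itself be non-negative.

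The argument is essentially algebraic, so I do not anticipate a serious obstacle. The one point that deserves care is admissibility of the competitor $\bu \equiv 0$: at the continuous level this is immediate, but within the neural-network parametrization introduced later one should verify that the hypothesis class contains (or can approximate) the zero map, otherwise the clean inequality in (i) would need to be replaced by a small error bound. Overall, the proof hinges on two structural features of $A$, namely $A \succeq 0$ and the null-space identity $A(\bz)\bz = 0$.
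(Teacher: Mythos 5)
Your proof is correct and follows essentially the same route as the paper: for (i), the change-of-variables identity for the pushforward density combined with the competitor $\bu\equiv 0$ and the non-negativity of the kinetic term (the paper phrases this as an inequality chain for $2\tau\mathcal{H}^N + d_L^{2,N}$, but it is the same argument); for (ii), differentiating the particle moments, symmetrizing in $i\leftrightarrow j$, and using that the relevant gradient differences are annihilated by $A$ — you merely spell out the per-case vanishing that the paper leaves implicit. Your caveat about the zero map lying in the neural-network hypothesis class is a fair practical observation but does not affect the proposition, which is stated at the level of the formulation \eqref{djko_p} where the infimum is over all $\bu$.
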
 
\begin{proof}
    Property 1 is a direct consequence of optimization. Let $s^{n+1}$ be the minimizer of the discrete variational problem \eqref{pjko} and denote by 
    $$d_L^{2, N}(s) \!:=\! \int_0^1 \frac{1}{2N^2} \sum_{i=1}^{N}\sum_{j=1}^{N}|s(t, \T_t(v_i^n)) - s(t, \T_t(v_j^n))|^2_{A(\T_t(v_i^n) - \T_t(v_j^n))} \rd t \,.$$ 
    Then we have
    \begin{flalign*}
        2\Delta t H^N(f^{n+1}) + \tfrac{1}{C_\gamma} d_L^{2, N}(s^{n+1})
        & = 2\Delta t H^N(f^{n}) - \tfrac{2\Delta t}{N}\sum_{i=1}^{N} \log |\det\nabla_v \T_1^{n+1}(v_i^n)| + \tfrac{1}{C_\gamma} d_L^{2, N}(s^{n+1}) \\
        & \leq 2\Delta t H^N(f^{n}) \,,
    \end{flalign*}
    where the last inequality holds by simply taking $s=\boldsymbol{0}$ in \eqref{pjko}. Hence, by the non-negativity of $d_L^{2, N}$, the result holds.
    
    Property 2 is true thanks to the flow map constraint: for $\varphi(v) = 1\,, v\,, |v|^2$, then
    \begin{flalign*}
        & \frac{\rd}{\rd t} \frac{1}{N} \sum_{i=1}^N \varphi(\T_t(v_i^n)) = \frac{1}{N} \sum_{i=1}^N \nabla\varphi(\T_t(v_i^n)) \cdot \frac{\rd}{\rd t} \T_t(v_i^n) \\
        = & -\frac{1}{N^2} \sum_{i=1}^{N}\sum_{j=1}^{N} \nabla\varphi(\T_t(v_i^n)) \cdot A(\T_t(v_i^n) - \T_t(v_j^n)) [s(t, \T_t(v_i^n)) - s(t, \T_t(v_j^n))] \\
        = & -\frac{1}{2N^2} \sum_{i=1}^{N}\sum_{j=1}^{N} [\nabla\varphi(\T_t(v_i^n)) - \nabla\varphi(\T_t(v_j^n))] \cdot A(\T_t(v_i^n) - \T_t(v_j^n)) [s(t, \T_t(v_i^n)) - s(t, \T_t(v_j^n))] = 0 \,.
    \end{flalign*}
\end{proof}

\subsection{Neural network approximation}
To implement \eqref{pjko}, we need to represent the vector field $s(t, v)$ in a finite-dimensional space. Here, we use a neural network and denote the approximation by $s_\theta(t,v)$, where $\theta$ represents the trainable network parameters. Compared with traditional representations such as polynomial or Fourier bases, neural networks scale more favorably with dimension and can naturally incorporate time dependence. Nevertheless, neural networks are neither the only nor a strictly necessary choice for approximation; they are simply a viable option. Given the modest size of the network employed in our implementation, training does not pose a significant challenge.

Specifically, the inner time interval $[0,1]$ is first discretized into $N_{\tau}$ uniform subintervals $[\tau_k, \tau_{k+1}]$ with inner time step $\tau = \frac{1}{N_{\tau}}$, where $k=0,1,\ldots, N_{\tau}-1$. For simplicity in illustrating the fully discretized JKO scheme, we solve the ODE constraints in \eqref{pjko} by the forward Euler method. Alternative high-order ODE solvers, such as Runge–Kutta methods, could also be employed in this framework. Given the samples $\{v_i^n\}_{i=1}^N$ at time $t^n$, the loss function at the $n+1$-th JKO step is defined as:
\begin{equation}\label{jko_loss}
    \inf_{\theta} ~ \frac{1}{N^2} \sum_{i=1}^{N}\sum_{j=1}^{N} \left[ \tau \sum_{k=0}^{N_{\tau}-1} \frac{1}{2}|s_\theta(\tau_k, z_i^{k}) - s_\theta(\tau_k, z_j^{k})|^2_{A(z_i^{k} - z_j^{k})} \right] - 2\Delta t C_\gamma \frac{1}{N} \sum_{i=1}^{N} h_{i}^{N_{\tau}} \,,
\end{equation}
where the inner update equations are given by
\begin{equation}\label{update_1}
\begin{split}
    & z_i^{k+1} = z_i^{k} - \tau \frac{1}{N} \sum_{j=1}^N A(z_i^{k} - z_j^{k}) [s_\theta(\tau_k, z_i^{k}) - s_\theta(\tau_k, z_j^{k})] \,, \\
    & h_{i}^{k+1} = h_{i}^{k} - \tau \frac{1}{N} \sum_{j=1}^N \big\{ A(z_i^{k} - z_j^{k}) : \nabla s_\theta(\tau_k, z_i^{k}) - (d - 1) |z_i^{k} - z_j^{k}|^{\gamma}(z_i^{k} - z_j^{k}) \cdot [s_\theta(\tau_k, z_i^{k}) - s_\theta(\tau_k, z_j^{k})] \big\} \,, \\
    & z_i^0 \!=\! v_i^n \,,~~ h_i^0 = 0 \,.
\end{split}
\end{equation}
Here we use Proposition \ref{prop:logdet} to compute $h_{i}^{k+1}$. The loss function \eqref{jko_loss} is then minimized using off-the-shelf optimizers. Once the optimal $s^{n+1}_{\theta}$ is learned, we update the particle velocities and densities at $t^{n+1}$ via
\begin{equation}\label{update_2}
    v_i^{n+1} := z_i^{N_{\tau}} \,,~~ f^{n+1}(v_i^{n+1}) := \frac{f^n(v_i^n)}{\exp{\left(h_{i}^{N_{\tau}}\right)}} \,.
\end{equation}
From now on, we refer to procedure \eqref{jko_loss}--\eqref{update_2} as the \emph{JKO--$N_{\tau}$ particle method}.

\begin{proposition} \label{prop32}
   The JKO--$N_{\tau}$ particle method \eqref{jko_loss}--\eqref{update_2} has the following properties for any $n \geq 0$:
    \begin{enumerate}[1.]
        \item Discrete entropy dissipation: $H^N(f^{n+1}) \leq H^N(f^{n})$.
        \item Discrete mass and momentum conservation: $\frac{1}{N} \sum_{i=1}^N \varphi(v_i^{n+1}) = \frac{1}{N} \sum_{i=1}^N \varphi(v_i^n)$ for $\varphi(v) = 1\,, v$.
        \item Discrete energy $\frac{1}{N} \sum_{i=1}^N |v_i^n|^2$ is conserved up to $O(\Delta t)$ when using a one-step forward Euler discretization in the inner time, i.e., $N_{\tau} = 1$.
    \end{enumerate}
\end{proposition}
\begin{proof}
    Property 1 follows the proof of property 1 in Proposition \ref{prop_31}. Properties 2 and 3 are similar to the proof of \cite[Proposition 2]{HUANG2025114053}, so we omit them here.
\end{proof}

\begin{remark} \label{remark1}
   We note that the above proposition assumes exact minimization, i.e., that the infimum in \eqref{jko_loss} is attained. In practice, however, this is difficult to guarantee. Nevertheless, part 1 of the proposition requires only that the loss function in \eqref{jko_loss} be non-positive, which is straightforward to ensure numerically. Parts 2 and 3 hold regardless of whether the infimum is attained, as they follow directly from the structural properties of the matrix $A$ in \eqref{update_1}, independent of the specific value of $s_\theta$ used.
\end{remark}

The procedure of the JKO--$N_{\tau}$ particle method is summarized in Algorithm \ref{algo:jko}.

\begin{algorithm}[tp]
\caption{JKO--$N_{\tau}$ particle method for Landau equation}
\label{algo:jko}
\begin{algorithmic}[1]
    \Require{$N$ initial particles $\{v_i^0\}_{i=1}^N \stackrel{i.i.d.}\sim f^0$; outer JKO time step $\Delta t$; inner time step $\tau$ and total number of inner steps $N_{\tau}$; total number of JKO steps $N_T$.}
    \Ensure{Neural networks $s^n_\theta$, velocity of particles $\{v_i^n\}_{i=1}^N$ and densities $\{f^{n}(v_i^{n})\}_{i=1}^N$ for all $n=1, \cdots, N_T$.}

    \For{$n=0,\cdots,N_T-1$}
        \State Initialize neural network $s_\theta$ for $t^{n+1}$.
        \While{not converged} 
            \State Update parameter $\theta$ of $s_\theta$ by minimizing the JKO loss \eqref{jko_loss} using Algorithm \ref{algo:opt}.
        \EndWhile
        \For{$i=1,\cdots,N$}
            \State Obtain $v_i^{n+1}$ from $v_i^{n}$ and $f^{n+1}(v_i^{n+1})$ from $f^n(v_i^n)$ via \eqref{update_1}--\eqref{update_2} using the optimal $s^{n+1}_\theta$.
        \EndFor
    \EndFor
\end{algorithmic}
\end{algorithm}

\begin{remark}
    The exact energy conservation could be achieved using the implicit midpoint discretization or, more generally, a discrete gradient integrator in time \cite{Hirvijoki_2021}. The trade-off is that this method requires an implicit update for $v^{n+1}_i$, which necessitates a fixed-point iteration. We leave the exploration of this method for future work.
\end{remark}
\begin{remark}
    Unlike several recent works that use neural networks to learn or solve the Landau collision operator \cite{Miller_Churchill_Dener_Chang_Munson_Hager_2021, LEE2023112031, CHUNG2023112400, NOH2025113665}, our approach directly solves the Landau equation and is entirely training-data-free. Moreover, our approach is computationally efficient in training, as shown in Fig. \ref{fig_eg4_time} in a later section. Furthermore, new trajectories from the same initial data can be generated by first storing the trained neural networks and then applying \eqref{update_1}--\eqref{update_2}. 
\end{remark}

\section{Stochasticity-accelerated JKO scheme}\label{sec4}
One advantage of \eqref{jko_loss} is that it is well-suited for stochastic methods, which are crucial for high-dimensional problems due to their efficiency and reduced memory consumption. In this section, we provide a detailed discussion on leveraging stochasticity in optimization and examine its convergence. Once $s^{n+1}_\theta$ is learned, particle updates can also be accelerated using the random batch method \cite{Carrillo_Jin_Tang_2022}, with further details provided in Section~\ref{sec:rbm}.

\subsection{Stochastic optimization}
Comparing the proposed JKO-based particle method with the score-based particle method \cite{HUANG2025114053, ilin2024}, the primary difference lies in their loss functions. The JKO loss function is designed to respect entropy dissipation, and the resulting scheme therefore enjoys exact entropy decay and unconditional stability. However, this comes at the cost of a more complex loss function. The score-based approach employs the following loss function:
\begin{equation*}
    \frac{1}{N} \sum_{i=1}^N |s^n_\theta(v_i^n)|^2 + 2 \nabla \cdot s^n_\theta(v_i^n) \,,
\end{equation*}
which requires only $O(N)$ computational complexity. In contrast, the JKO method \eqref{jko_loss}--\eqref{update_1} involves a double summation, resulting in $O(N^2)$ complexity. Furthermore, the practical implementation of \eqref{jko_loss}--\eqref{update_1} often leads to issues such as GPU out-of-memory errors when $N$ becomes large.

Nevertheless, a common approach to address these issues is to apply stochastic optimization algorithms. If we incorporate the update equation for $h_i^{k}$ in \eqref{update_1} directly into the loss \eqref{jko_loss}, the loss function can be written as
\begin{equation*}
    \ell(\theta) = \frac{1}{N^2} \sum_{i=1}^{N}\sum_{j=1}^{N} \ell_{i,j}(\theta) \,,
\end{equation*}
for some suitable $\ell_{i,j}(\theta)$. In standard mini-batch SGD \cite{Bottou2018}, the indices $i$ and $j$ are treated as independent, and a batch of index pairs $(i, j) \in [N] \times [N]$ is randomly selected to compute the stochastic gradient. However, in our case, $i$ and $j$ represent two interacting particles. Therefore, it is crucial to preserve their relationship. To address this, we first randomly select a batch of indices from $[N]$ and then update parameter $\theta$ based on the gradient information within this batch. 

To further speed up the training process, we adopt the shuffling-type gradient method \cite{sgd_shuffling}. That is, for each epoch, we choose a batch size $B$ ($B \ll N$) and randomly divide $N$ indices into $\frac{N}{B}$ batches, denoted by $C_q$, $q=1,\ldots, \frac{N}{B}$. We then perform one step of gradient descent within each $C_q$ successively. Thus, the cost of one gradient descent decreases to $O(BN)$. The random division can be realized in $O(N)$ through random permutation via a PyTorch function such as `torch.randperm'. Consequently, the total computational cost per epoch is significantly reduced to $O(BN)$. In practice, the batch size is chosen heuristically to balance accuracy and computational time, and the vanilla gradient descent is substituted with more efficient optimizers. The procedure is summarized in Algorithm \ref{algo:opt}.

\begin{algorithm}[tp]
\caption{mini-batch SGD with random reshuffling}
\label{algo:opt}
    \begin{algorithmic}[1]
        \Require Initial parameters $\theta_0$; mini-batch size $B$; epoch numbers $K$.
        \Ensure Trained parameters $\theta_K$.
        \For{epoch $k=1,\cdots, K$}
            \State Divide indices $[N]$ into $\frac{N}{B}$ batches randomly.
            \State Set $\theta^{(k)}_0=\theta_{k-1}$.
            \For{each batch $C_q$}
                \State $\theta^{(k)}_q = \theta^{(k)}_{q-1} - \alpha^{(k)}_q \frac{1}{B^2} \sum_{i \in C_q}\sum_{j \in C_q} \nabla_{\theta} \ell_{i,j}(\theta^{(k)}_{q-1})$, where $\alpha^{(k)}_q$ is the learning rate.
            \EndFor
            \State Set $\theta_k=\theta^{(k)}_{\frac{N}{B}}$.
        \EndFor
    \end{algorithmic} 
\end{algorithm}

\begin{remark}
    It is worth noting that the randomness introduced in the method does not violate the properties summarized in Proposition~\ref{prop32}. This is because, within each batch, the constraint in \eqref{pjko} naturally conserves mass, momentum, and energy. Additionally, the entropy decay property, which, as noted in Remark~\ref{remark1}, only requires verifying that the objective function is non-positive. In practice, we monitor the loss value for each batch and ensure that the sum over all batches is non-positive.
\end{remark}

\subsection{Convergence analysis for optimization}
Following the analysis of the SGD with random reshuffling \cite{sgd_shuffling}, in this section, we show the convergence of Algorithm \ref{algo:opt}. 
We first make the following assumptions, which are standard in optimization theory.
\begin{unlist}
    \item[(A.1)]~~ $dom(\ell) := \{ \theta \in \R^{d_\theta} : \ell(\theta) < +\infty \} \not= \emptyset$ and $\ell_* := \inf_{\theta} \ell(\theta) > -\infty$. 
    \item[(A.2)]~~ $\ell_{i,j}(\theta)$ is $L$-smooth, i.e.
    $\| \nabla_\theta \ell_{i,j}(\theta_2) - \nabla_\theta \ell_{i,j}(\theta_1) \| \leq L \| \theta_2 - \theta_1 \|$,  for all $i,j \in [N]$. 
    \item[(A.3)]~~ $\nabla \ell_{i,j}$ has bounded variance: $\exists$ two constants $M \geq 0$ and $\sigma >0$ such that for all $\theta \in \R^{d_\theta}$, we have $\frac{1}{N^2} \sum_{i=1}^{N}\sum_{j=1}^{N} \left\| \nabla\ell_{i,j}(\theta) - \nabla\ell(\theta) \right\|^2 \leq M \left\| \nabla\ell(\theta) \right\|^2 + \sigma^2 \,.$
\end{unlist}

We note that assumption (A.2) can be relaxed in the analysis of shuffling-type gradient descent methods. In \cite{pmlr-v267-he25u}, it is only required that $\ell$ and $\ell_{i,j}$ be $g$-smooth for some sub-quadratic function $g$. Here, by $g$-smooth we mean that $\| \nabla_\theta^2 \ell(\theta) \| \leq g(\| \nabla_\theta \ell(\theta) \|)$ almost everywhere for some non-decreasing continuous function $g$.

The main result is as follows. 
\begin{theorem}\label{thm_opt_conv}
    Suppose the above assumptions hold. Let learning rate $\alpha^{(k)}_q := \alpha \frac{B}{N} > 0$ for $0 < \alpha \leq \frac{1}{L\sqrt{2(3M+2)}} \frac{B}{N}$. Then after $K$ epochs we have,
    \begin{equation*}
        \frac{1}{K} \sum_{k=0}^{K-1} \mathbb{E} \left[ \left\| \nabla_{\theta} \ell(\theta_k) \right\|^2 \right]
        \leq \frac{4}{\alpha K} \left[\ell(\theta_0) - \ell_*\right] + 6\sigma^2 L^2 \alpha^2 \frac{N}{B^2} \,.
    \end{equation*}
\end{theorem}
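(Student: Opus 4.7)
The plan is to adapt the random-reshuffling SGD analysis of \cite{sgd_shffling} to our double-sum loss $\ell(\theta) = \frac{1}{N^2}\sum_{i,j}\ell_{i,j}(\theta)$. One epoch consists of $N/B$ gradient steps at step size $\alpha B/N$, so the effective per-epoch step is $\alpha$. I would start from the $L$-smoothness descent inequality
\begin{equation*}
\ell(\theta_{k+1}) \le \ell(\theta_k) + \langle \nabla\ell(\theta_k),\,\theta_{k+1}-\theta_k\rangle + \frac{L}{2}\|\theta_{k+1}-\theta_k\|^2,
\end{equation*}
with $\theta_{k+1}-\theta_k = -\alpha\tfrac{B}{N}\sum_{q=1}^{N/B} g_q^{(k)}$ and $g_q^{(k)} := \tfrac{1}{B^2}\sum_{i,j\in C_q}\nabla_\theta\ell_{i,j}(\theta_{q-1}^{(k)})$. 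The goal is a per-epoch descent of the form $\mathbb{E}[\ell(\theta_{k+1})] \le \mathbb{E}[\ell(\theta_k)] - \tfrac{\alpha}{4}\mathbb{E}\|\nabla\ell(\theta_k)\|^2 + C\sigma^2 L^2 \alpha^3 N/B^2$, after which telescoping over $k=0,\ldots,K-1$ and invoking $\ell(\theta_k)\ge\ell_*$ from (A.1) immediately yields the claim.

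The key decomposition introduces the idealized aggregated gradient $G_k := (N/B)\nabla\ell(\theta_k)$. Applying Young's inequality to the cross term reduces the inner product contribution to $-\alpha\|\nabla\ell(\theta_k)\|^2$ plus a controlled multiple of $\mathbb{E}\|\sum_q g_q^{(k)} - G_k\|^2$. This residual error I would split by triangle inequality into (i) an iterate-drift piece $\sum_q \|g_q^{(k)} - g_q^{(k)}|_{\theta_k}\|^2$, controlled via (A.2) applied to each $\ell_{i,j}$ together with a per-step bound on $\|\theta_{q-1}^{(k)}-\theta_k\|^2$, and (ii) a reshuffling-noise piece $\|\sum_q g_q^{(k)}|_{\theta_k} - G_k\|^2$, whose expectation is bounded by $(N/B)(M\|\nabla\ell(\theta_k)\|^2 + \sigma^2)$ using (A.3) conditioned on $\theta_k$. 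The iterate-drift term expands recursively through the update rule; closing that recursion produces the $\alpha^3 N/B^2$ scaling on $\sigma^2 L^2$ and an $\alpha^2 L^2 (N/B)^2 M$-type coefficient in front of $\|\nabla\ell(\theta_k)\|^2$.

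The main obstacle will be the bookkeeping: three separate error branches (the Young-inequality error, the iterate-drift amplified by $M$, and the $L/2$ quadratic term of the descent inequality) each contribute a multiple of $\|\nabla\ell(\theta_k)\|^2$, and all of them must be absorbed into the clean $-\alpha\|\nabla\ell(\theta_k)\|^2$ drift. Collecting coefficients is what forces the stepsize condition $\alpha \le \tfrac{1}{L\sqrt{2(3M+2)}}\tfrac{B}{N}$, with the constant $3M+2$ arising exactly as the three $M$-weighted contributions plus the pure smoothness piece. A secondary subtlety is that a uniform equipartition of $[N]$ into $N/B$ batches is not i.i.d.\ sampling, so the noise bound in (ii) must be handled as a conditional-expectation statement over the full epoch rather than a single-step unbiasedness claim, leveraging the fact that averaged across an epoch every index is visited exactly once. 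Once the per-epoch descent is in hand, summing over $k$ and dividing by $\alpha K/4$ reads off the stated $4/(\alpha K)$ and $6\sigma^2 L^2 \alpha^2 N/B^2$ constants.
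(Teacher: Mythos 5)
Your plan is correct and follows essentially the same route as the paper: the $L$-smoothness descent inequality with the cross term completed into $-\frac{\alpha}{2}\|\nabla\ell(\theta_k)\|^2$ plus an aggregated-gradient error, the splitting of that error into an iterate-drift piece (closed by a self-bounding recursion via (A.2)) and a variance piece (bounded deterministically via (A.3) using the fact that the batches partition $[N]$), absorption of the resulting $\|\nabla\ell(\theta_k)\|^2$ coefficients under the step-size restriction, and telescoping against $\ell_*$. These are precisely the paper's Lemmas \ref{sgd_lemma1} and \ref{sgd_lemma2} combined as in its proof of Theorem \ref{thm_opt_conv}.
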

As mentioned earlier, the main difference between our method and the vanilla mini-batch SGD is the treatment of the indices $i$ and $j$. Consequently, the corresponding gradient needs to be estimated differently. 
\begin{proof}
Combining Lemma \ref{sgd_lemma1} and \ref{sgd_lemma2}, we have
\begin{flalign*}
    \ell(\theta_{k+1}) 
    & \leq \ell(\theta_{k}) 
    + \frac{\alpha L^2 B}{2N} \sum_{q=1}^{\frac{N}{B}} \left\| \theta_{k} - \theta^{(k+1)}_{q-1} \right\|^2
    - \frac{\alpha}{2} \left\| \nabla\ell(\theta_{k}) \right\|^2 \\
    & \leq \ell(\theta_{k}) 
    + \frac{\alpha L^2}{2N} \frac{\alpha^2 N^2}{B^2} \left( (3M+2) \left\| \nabla_{\theta} \ell(\theta_k) \right\|^2 + 3\sigma^2 \right) 
    - \frac{\alpha}{2} \left\| \nabla\ell(\theta_{k}) \right\|^2 \\
    & = \ell(\theta_{k}) 
    + \frac{\alpha}{2} \left( \frac{\alpha^2 L^2 N}{B^2}(3M+2) - 1 \right) \left\| \nabla_{\theta} \ell(\theta_k) \right\|^2 + \frac{3\alpha_k^3 \sigma^2 L^2 N}{2B^2} \,. 
\end{flalign*}
Since $0 < \alpha \leq \frac{1}{L\sqrt{2(3M+2)}} \frac{B}{N}$, we have $\frac{\alpha^2 L^2 N}{B^2}(3M+2) - 1 \leq -\frac{1}{2}$. Hence,
\begin{equation*}
    \ell(\theta_{k+1}) \leq \ell(\theta_{k}) 
    - \frac{\alpha}{4} \left\| \nabla_{\theta} \ell(\theta_k) \right\|^2 + \frac{3\sigma^2 L^2}{2} \frac{\alpha^3 N}{B^2} \,.
\end{equation*}
Rearranging the above inequality, we see that
\begin{flalign*}
    \frac{1}{K} \sum_{k=0}^{K-1} \left\| \nabla_{\theta} \ell(\theta_k) \right\|^2 
    & \leq \frac{4}{\alpha K} \sum_{k=0}^{K-1} [\ell(\theta_k) - \ell(\theta_{k+1})] + 6\sigma^2 L^2 \alpha^2 \frac{N}{B^2} \\
    & = \frac{4}{\alpha K} [\ell(\theta_0) - \ell(\theta_K)] + 6\sigma^2 L^2 \alpha^2 \frac{N}{B^2} \\
    & \leq \frac{4}{\alpha K} [\ell(\theta_0) - \ell_*] + 6\sigma^2 L^2 \alpha^2 \frac{N}{B^2} \,.
\end{flalign*}
The final result is obtained by taking the expectation. 
\end{proof}

To conclude this subsection, we emphasize that the result of the above theorem only estimates the accuracy of each individual optimization step and does not address how the resulting optimization error propagates through the overall JKO scheme. Quantifying such propagation is highly nontrivial and lies beyond the scope of the present paper. Nevertheless, we note a recent preprint that addresses this issue for classical Wasserstein gradient flows \cite[Theorem~1.2]{dimarino2025}. Extending this analysis to the Landau case considered here remains nontrivial and constitutes an interesting direction for future work.

\subsection{Random batch method} \label{sec:rbm}
Utilizing the update equations in \eqref{update_1} with full particles can be computationally expensive when the particle number gets large or in high dimensions. The random batch method \cite{JIN2020108877, Carrillo_Jin_Tang_2022} is designed to address this challenge by restricting particle collisions to small, randomly selected batches. This is what we will adopt here. 

Given $N$ particles, we randomly divide them into $\frac{N}{B'}$ batches $C_q$, $q=1,\ldots,\frac{N}{B'}$, with a batch size $B' \ll N$. For the $i$--th particle in batch $C_q$, we update it using the following equation:
\begin{equation}\label{rbm}
    \begin{split}
        & z_i^{k+1} = z_i^{k} - \tau \frac{1}{B'} \sum_{j \in C_q} A(z_i^{k} - z_j^{k}) [s_\theta(\tau_k, z_i^{k}) - s_\theta(\tau_k, z_j^{k})] \,, \\
        & h_{i}^{k+1} = h_{i}^{k} - \tau \frac{1}{B'} \sum_{j \in C_q} \big\{ A(z_i^{k} - z_j^{k}) : \nabla s_\theta(\tau_k, z_i^{k}) - (d - 1) |z_i^{k} - z_j^{k}|^{\gamma} (z_i^{k} - z_j^{k}) \cdot [s_\theta(\tau_k, z_i^{k}) - s_\theta(\tau_k, z_j^{k})] \big\} \,, \\
        & z_i^0 = v_i^n \,,~~ h_{i}^{0} = 0 \,.
    \end{split}
\end{equation}
Since each particle interacts only within its own batch, the complexity of updating particles reduces from $O(N^2)$ to $O(B'N)$ per step. This approach accelerates the particle updates and preserves all physical quantities without significantly impacting the accuracy, as demonstrated in \cite[Proposition 3.1 and 3.3]{Carrillo_Jin_Tang_2022} and \cite[Theorem 3.1]{JIN2020108877}.

\section{Numerical examples}\label{sec5}
In this section, we present several numerical examples using the proposed JKO-based particle method for both the Maxwellian and Coulomb cases. To demonstrate the superior stability of the proposed method, we compare it with the score-based approach \cite{HUANG2025114053}. In each case, we fine-tune optimization hyperparameters such as the learning rate and the number of epochs to ensure that each method performs optimally, allowing for a fair comparison. Notably, we observe that when the collision effect is strong, the score-based approach is highly sensitive to the choice of learning rate and iteration count, whereas the JKO approach is much more robust to these parameter choices. 

As mentioned earlier in \eqref{opt_u1}, the optimal vector field $s(1, v)$ takes the form of the score function $\nabla \log f(1, v)$, with an additional term in $\operatorname{span} \{ \mathbf{1}, v \}$. This additional term vanishes when multiplied by $A(v-v_*)$. Based on this observation, the architecture of the neural network $s_\theta$ is similar to that used in the score-based particle method. Specifically, $s_\theta$ is parameterized as a fully connected neural network with 3 hidden layers, each containing 32 neurons and utilizing the \texttt{swish} activation function. We also note that the initialization of the neural network significantly impacts training efficiency. Our best practice protocol is as follows: for the first step, we initialize the biases to zero and set the weights using a truncated normal distribution with a variance of 1/\texttt{fan\_in}. Then we use the parameters from the previous time step to initialize the network at the next time step. The optimizer \texttt{AdamW} \cite{loshchilov2018decoupled} is used for all tests. 

All particles are initially sampled from the initial data using acceptance-rejection sampling,  and the random batch method \eqref{rbm} is employed for particle updates. To visualize particle density and compare it with the reference solution, we either use kernel density estimation with a Gaussian kernel $\psi_{\epsilon}$: 
\begin{equation} \label{kde}
   f_{\text{kde}}^n(v) := \frac{1}{N}\sum_{i=1}^N \psi_{\epsilon}(v-v_i^n) \,,~ \psi_{\epsilon}(v) = \frac{1}{(2\pi \epsilon^2)^{d/2}} \exp\left(-\frac{|v|^2}{2\epsilon^2}\right);
\end{equation}
or through the density update equation \eqref{update_2}, which enables us to obtain the density from those evolved particles directly. All experiments are implemented in PyTorch and executed on an Nvidia A40 GPU at the Minnesota Supercomputer Institute.

\subsection{BKW solution for Maxwellian molecules}
We first evaluate the accuracy of our method using the BKW solution to the Landau equation (see \cite[Appendix A]{carrillo2020particle}), which has an analytical expression and thus serves as an excellent benchmark for accuracy verification. Specifically, consider the collision kernel $A(z)=(|z|^2I_d - z \otimes z)$, then the BKW solution takes the form: 
\begin{equation} \label{BKW}
    f_{\text{bkw}}(t, v) = \frac{1}{(2\pi K)^{d/2}} \exp\left( -\frac{|v|^2}{2K} \right) \left( \frac{(d+2)K-d}{2K} \!+\! \frac{1-K}{2K^2}|v|^2 \right) \,,~ 
    K = 1 - D\exp\left(-2C_\gamma (d-1)t \right) \,.
\end{equation}

We begin with dimension $d=2$ and set $D = \frac{1}{2}$. We consider a weakly collisional regime with collision strength \(C_\gamma = \frac{1}{16}\) and time step of \(\Delta t = 0.01\). The total number of particles is $N=160^2$, and the batch size used for stochastic optimization and random batch updates is $1280$. We apply the JKO–1 and JKO–2 methods with the forward Euler solver (FE) to the same problem and examine how the number of inner time steps $N_{\tau}$ affects the overall accuracy. For both JKO--1 and JKO--2, we use a learning rate \(\alpha = 1 \times 10^{-2}\) and train for 50 epochs for the first JKO step and $5$ epochs for subsequent JKO steps.

The left and center plots of Fig.~\ref{fig_eg1_macro_weak} illustrate the conservation and the entropy dissipation properties of our solver. On the left, the kinetic energy is conserved with only a small error, and JKO--2 shows a smaller conservation error than JKO--1 due to its more accurate inner-time discretization. In the center, the entropy closely matches the analytical value for both JKO--1 and JKO--2. Based on this observation, the choice of inner time discretization has little impact on accuracy in the weakly collisional regime. Thus, we use the simplest JKO--1 (FE) method for all subsequent weakly collisional tests.

We further check the convergence in particle number $N$ by examining the following average $L^2$ error:
\begin{equation*}
    e_N = \sqrt{\frac{1}{J} \sum_{j=1}^J \left| H^N(f^{n}) - H(f_{\text{bkw}}(t^n)) \right|^2 } \,.
\end{equation*}
We compute $e_N$ over $J=10$ runs for each value of $N$, using different random seeds in each run to ensure independence. The right plot of Fig.~\ref{fig_eg1_macro_weak} shows the expected Monte Carlo convergence rate with respect to the particle number.

\begin{figure}[htbp]
    \centerline{\includegraphics[scale=0.38]{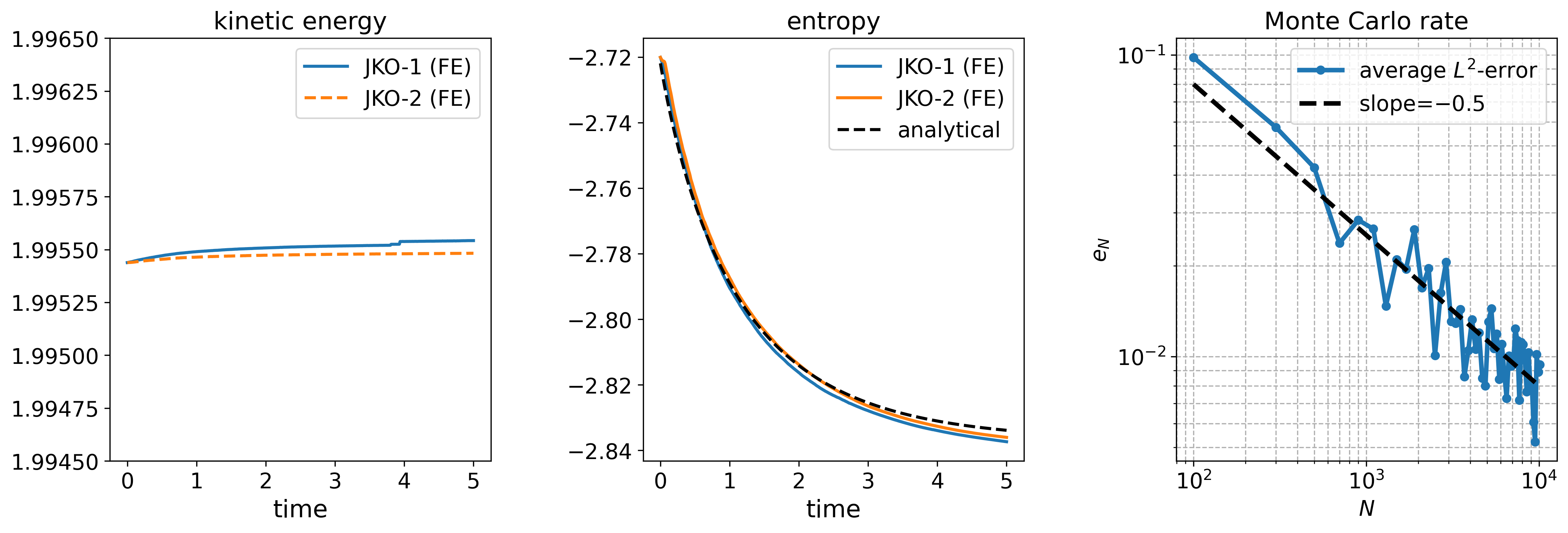}}
    \caption{Time evolution of macroscopic physical quantities for a 2D BKW solution with weak collision strength $C_\gamma=\frac{1}{16}$. Left: time evolution of the kinetic energy. Center: time evolution of the entropy. Right: Monte Carlo convergence rate with respect to particle number.}
    \label{fig_eg1_macro_weak}
\end{figure}

Next, we compare the stability of the proposed JKO-based particle method with its counterpart, the score-based particle method, in the strongly collisional regime, characterized by a larger $C_\gamma = 10$. In this setting, the solution converges to equilibrium more quickly, as evidenced by the rapid convergence of $K$ to 1 in \eqref{BKW}. To resolve this fast timescale, explicit schemes require a sufficiently small time step $\Delta t$. This is demonstrated in Fig.~\ref{fig_eg1_macro_strong}, where the score-based approach encounters instability even with a small time step $\Delta t = 0.01$, whereas the JKO--3 approach with the RK4 solver remains stable at a much larger step $\Delta t = 0.1$. The JKO--1 approach with the FE solver performs somewhat in between: it is stable at $\Delta t = 0.01$, showing better stability than the score-based approach thanks to its built-in entropy dissipation, but it becomes unstable at $\Delta t = 0.1$, showing reduced stability relative to the JKO--3 approach due to the larger inner time discretization error. 

\begin{figure}[htbp]
    \centerline{\includegraphics[scale=0.38]{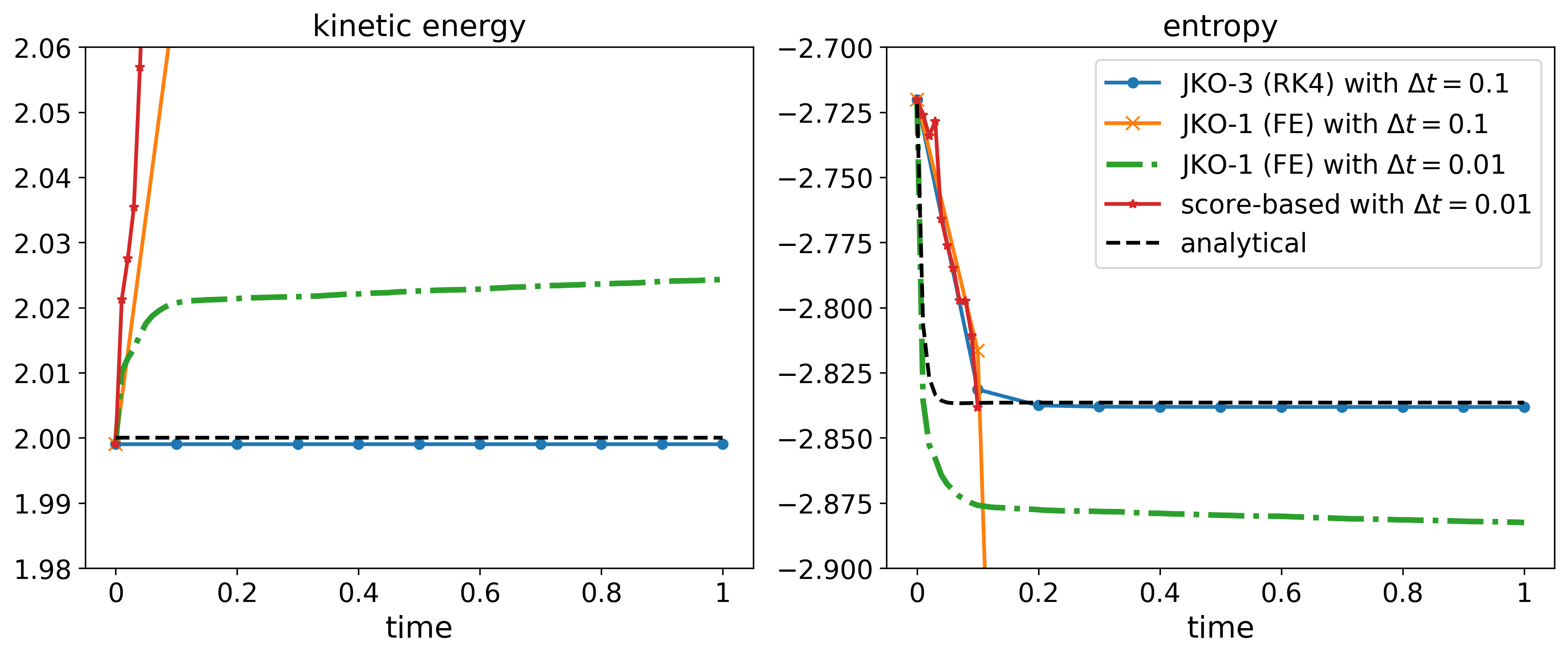}}
    \caption{Comparison between JKO--3 (RK4), JKO--1 (FE), and score-based particle methods with varying time step sizes for a 2D BKW solution with strong collision strength $C_\gamma=10$. Left: time evolution of the kinetic energy. Right: time evolution of the entropy.}
    \label{fig_eg1_macro_strong}
\end{figure}

We further extend the comparison to dimension $d=3$ with strong collisions. We set $D=1$, collision strength $C_\gamma=3$, and the total number of particles to $N=30^3$, with time steps $\Delta t = 0.01$ or $0.1$. Fig. \ref{fig_eg2_macro} again verifies the discrepancy in stability of the three methods, with the JKO--3 (RK4) method being the most stable, followed by the JKO--1 (FE) method, and the score-based particle method with the least stability.

\begin{figure}[htbp]
    \centerline{\includegraphics[scale=0.38]{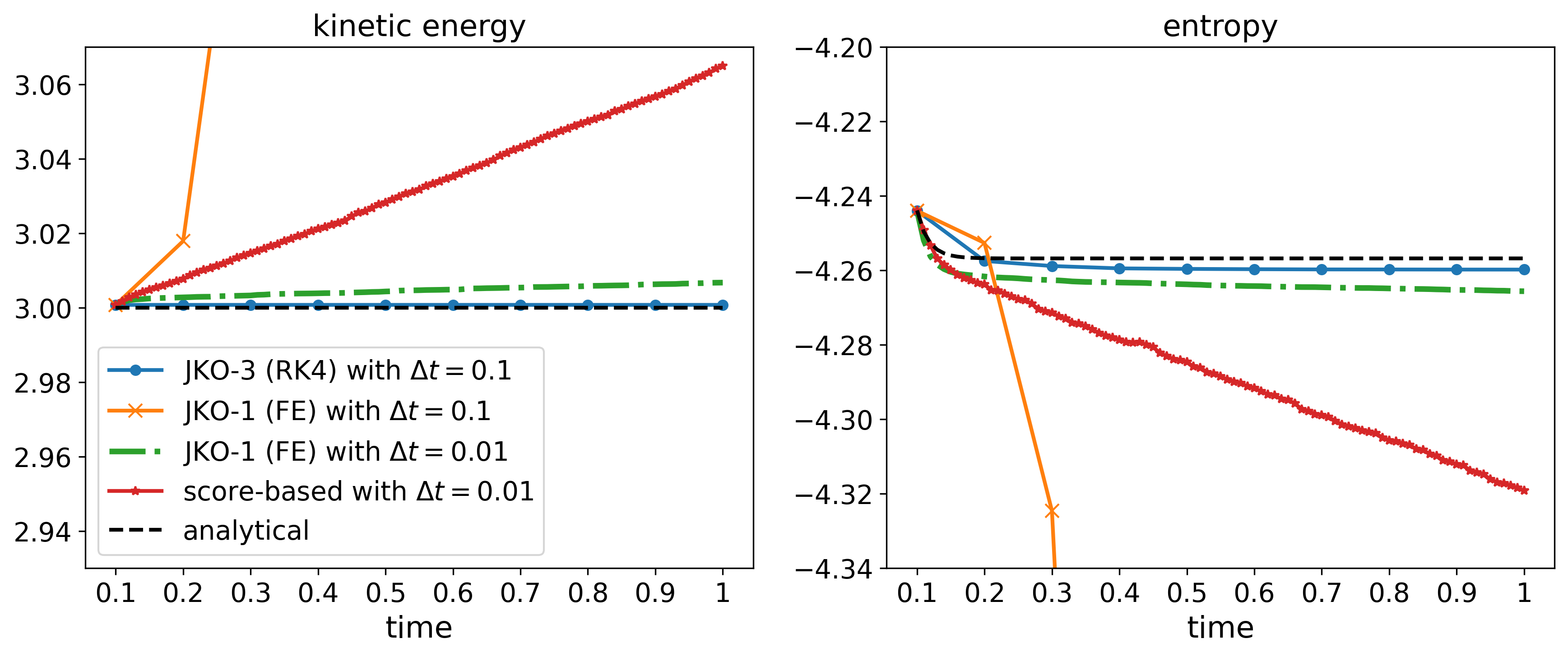}}
    \caption{Comparison between JKO--3 (RK4), JKO--1 (FE), and score-based particle methods with varying time step sizes for a 3D BKW solution with strong collision strength $C_\gamma=3$. Left: time evolution of the kinetic energy. Right: time evolution of the entropy.}
    \label{fig_eg2_macro}
\end{figure}

\subsection{2D bi-Maxwellian example with Coulomb interaction}
Now we consider a more physically relevant interaction: the Coulomb interaction. First, let us examine the two-dimensional case, where the collision kernel is
$A(z)=\frac{1}{|z|^3}(|z|^2I_2 - z \otimes z)$. We choose an initial distribution given by a bi-Maxwellian:
\begin{equation*}
    f^0(v) = \frac{1}{4\pi} \left\{ \exp\left( -\frac{|v-v_1|^2}{2} \right) + \exp\left( -\frac{|v-v_2|^2}{2} \right) \right\} ,~ v_1=(-2,1) ,~ v_2=(0,-1) \,.
\end{equation*}

In this experiment, the numerical parameters are set as follows: the collision strength is $C_\gamma=\frac{1}{16}$, the time step is $\Delta t=0.1$, the total number of particles is $N=120^2$, and the batch size is $1800$. For the JKO first step, we use a learning rate \(\alpha = 1 \times 10^{-2}\) and train for 50 epochs. For subsequent JKO steps, we adjust the learning rate to \(\alpha = 5 \times 10^{-4}\) and train for 5 epochs.

Since there is no analytical solution for this example, we compare our reconstructed solution from kernel density estimation \eqref{kde} with the blob solution obtained using the deterministic particle method from \cite{carrillo2020particle}, with the same number of particles. We set the computational domain to $[-10, 10]^2$ and use a Gaussian kernel with bandwidth $\epsilon = 0.3$ for density computation. The similarity between the reconstructed solution and the blob solution, as shown in Fig. \ref{fig_eg3_kde}, indicates that the JKO-based particle method is effective for Coulomb interactions. Fig. \ref{fig_eg3_macro} demonstrates that our method conserves energy and maintains entropy dissipation reasonably well. Additionally, Fig. \ref{fig_eg3_density} depicts the evolution of the particle solution obtained through the density formula \eqref{update_2} from $t=0$ to $t=160$, showing the transition from the initial bi-Maxwellian distribution to the equilibrium Maxwellian distribution.

\begin{figure}[htbp]
    \centerline{\includegraphics[scale=0.38]{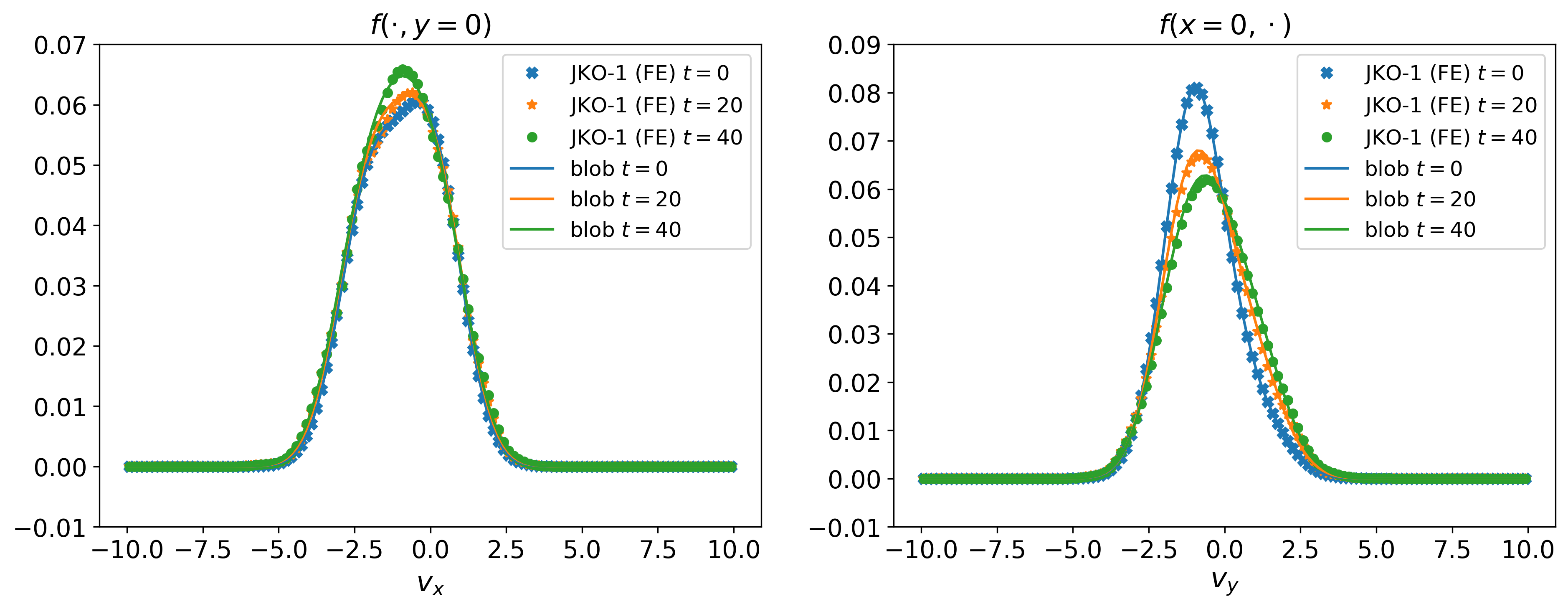}}
    \caption{Slice plots of the reconstructed and blob solution for an example of 2D Coulomb interaction at $t=0$, $20$, and $40$. Left: $f(\cdot, y=0)$. Right: $f(x=0, \cdot)$. }
    \label{fig_eg3_kde}
\end{figure}
\begin{figure}[htbp]
    \centerline{\includegraphics[scale=0.38]{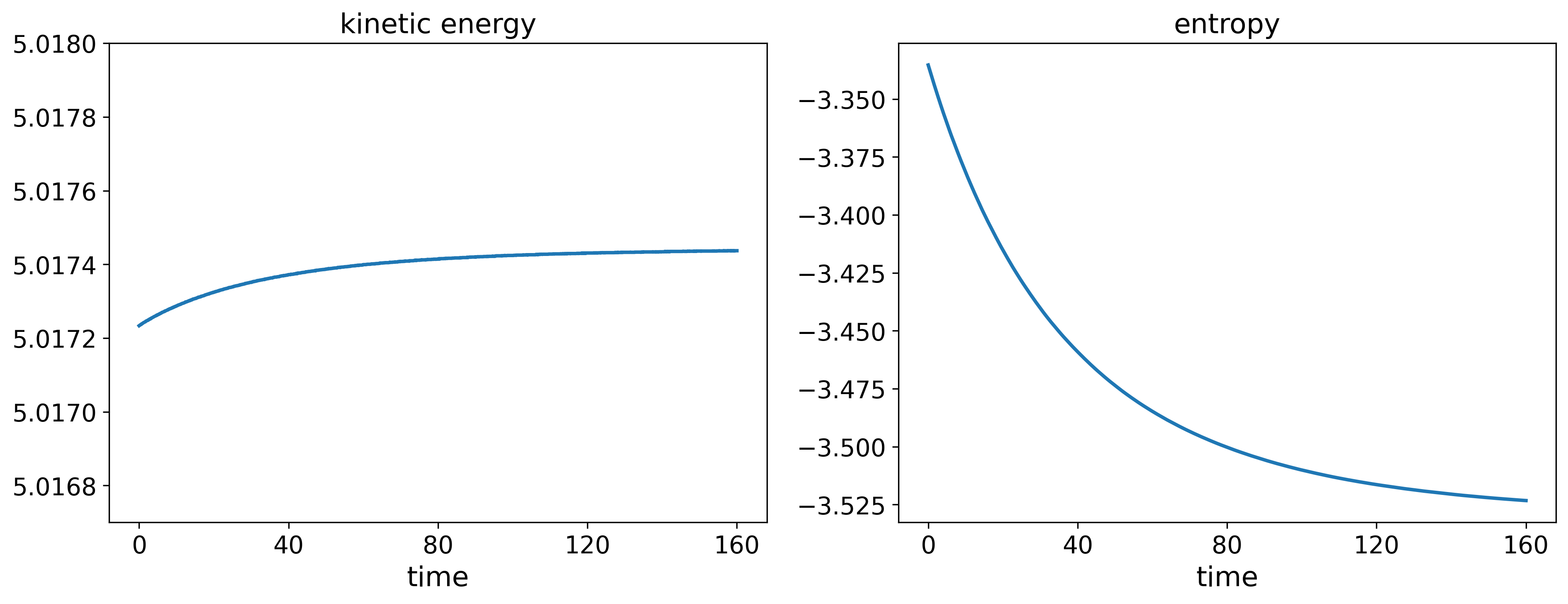}}
    \caption{Time evolution of macroscopic physical quantities for an example of 2D Coulomb interaction. Left: time evolution of the kinetic energy. Right: time evolution of the entropy.}
    \label{fig_eg3_macro}
\end{figure}
\begin{figure}[htbp]
    \centerline{\includegraphics[scale=0.38]{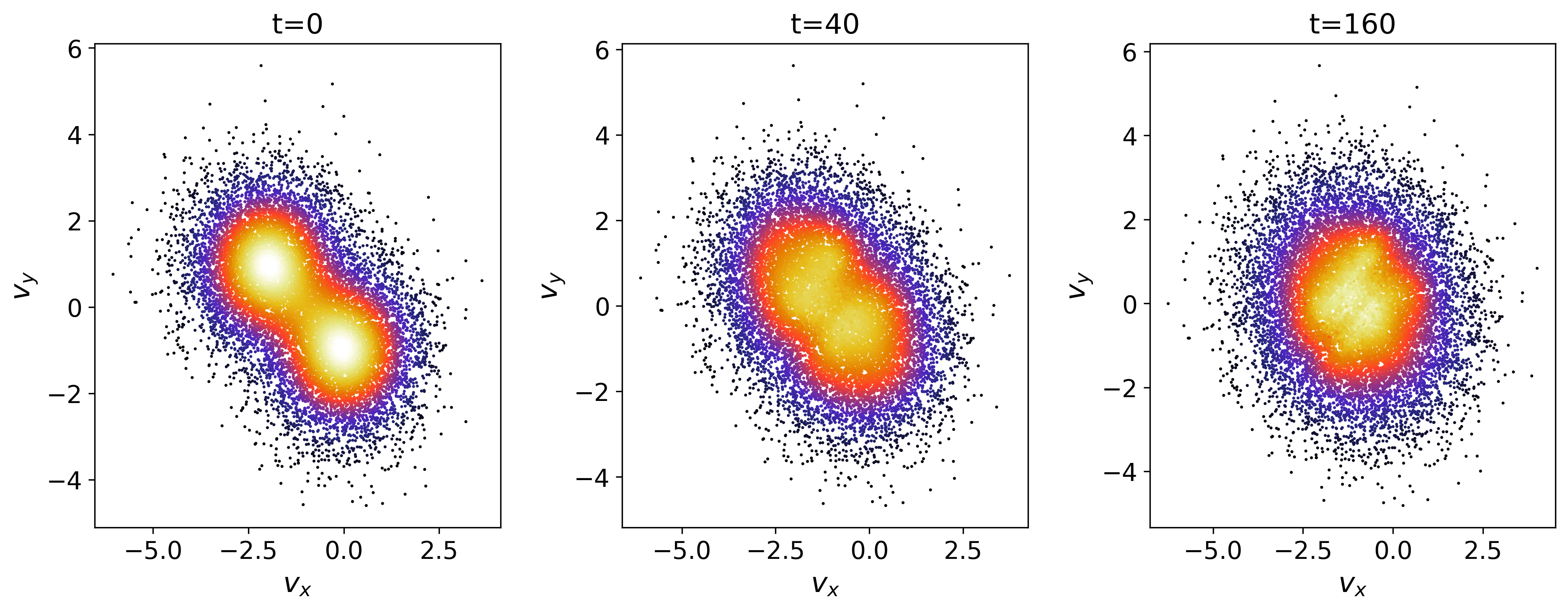}}
    \caption{Scatter plots of particle solution for an example of 2D Coulomb interaction at $t=0$, $40$, and $160$.}
    \label{fig_eg3_density}
\end{figure}

\subsection{3D Rosenbluth problem with Coulomb interaction}\label{sec:5.3}
Here, we extend our consideration of Coulomb interactions to three dimensions, using the collision kernel given by: $A(z)=\frac{1}{|z|^3}(|z|^2I_3 - z \otimes z)$. The initial condition is set to
\begin{equation*}
    f^0(v) = \frac{1}{S^2} \exp\left( -S\frac{(|v|-\sigma)^2}{\sigma^2} \right) \,,~ \sigma=0.3 \,,~ S=10 \,.
\end{equation*}

We first consider a weakly collisional regime with collision strength \(C_\gamma = \frac{1}{4\pi}\). We use $N = 40^3$ particles, a time step of $\Delta t = 0.2$, and a batch size of $2560$. The learning rate is set to \(\alpha = 1 \times 10^{-2}\) and we train for 50 epochs for the first JKO step and 5 epochs for subsequent JKO steps.

Fig. \ref{fig_eg4_kde} shows the reconstructed density using kernel density estimation \eqref{kde} with $\epsilon=0.035$. The result is in good agreement with the blob and spectral solution presented in \cite{carrillo2020particle}. We also test the computational efficiency of the proposed method in comparison with two other siblings: the blob method \cite{carrillo2020particle} and the score-based particle method \cite{HUANG2025114053}. In the weakly collisional regime, the blob, score-based, and JKO-based particle methods all admit the same time step size. Thus, the main difference in efficiency comes from how each method computes the score, i.e., $\nabla\log f$. Specifically, the blob method uses kernel density estimation, whereas both the score-based and the JKO-based methods learn the score through optimization, with the former using a score-matching technique, and the latter minimizing a loss function derived from the JKO scheme. As shown in the left plot of Fig. \ref{fig_eg4_time}, both the score-based method and the JKO--1 (FE) approach scale as $O(N)$, with the score-based approach being faster. This is expected, as both approaches involve learning the score through neural network training, but the score-based method has a simpler loss function. However, the JKO--1 (FE) approach offers additional benefits not shared with the score-based approach, such as better stability and exact entropy decay. The right plot of Fig.~\ref{fig_eg4_time} demonstrates the efficiency of the random batch method \eqref{rbm}, clearly showing that it reduces the computational cost from $O(N^2)$ to $O(N)$. It is important to note that this reduction is particularly significant in higher dimensions, such as in the spatially inhomogeneous Landau equation.

\begin{figure}[htbp]
    \centerline{\includegraphics[scale=0.33]{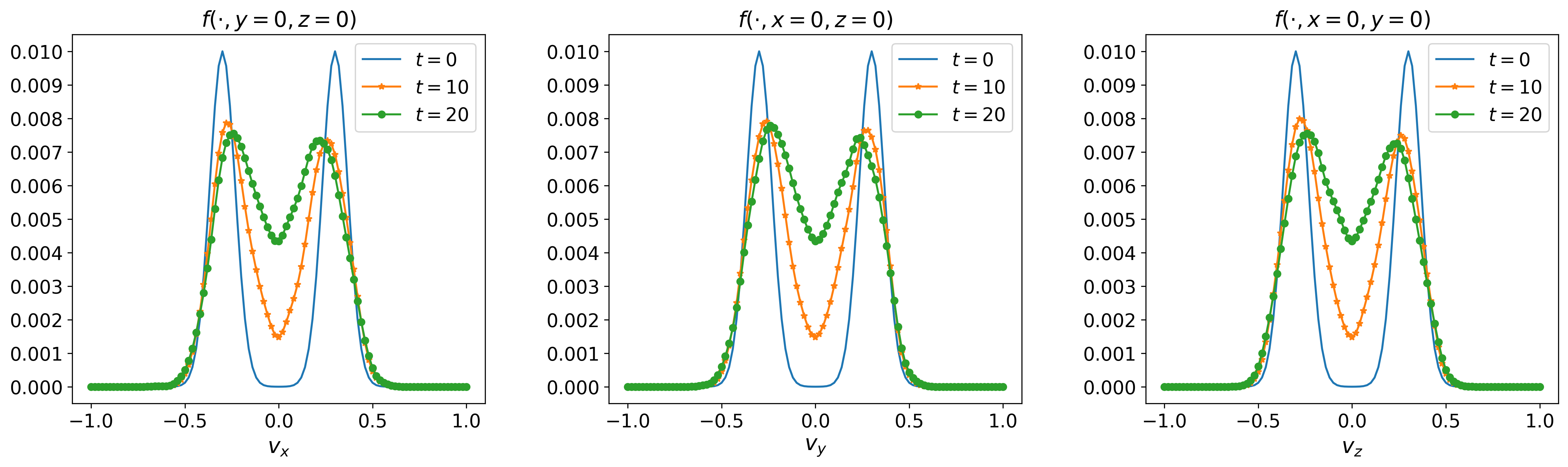}}
    \caption{Slice plots of the reconstructed solution for a 3D Rosenbluth problem with Coulomb interaction in weakly collisional regime $(C_\gamma=\frac{1}{4\pi})$ at $t=0$, $10$, and $20$.}
    \label{fig_eg4_kde}
\end{figure}

\begin{figure}[htbp]
    \centerline{\includegraphics[scale=0.38]{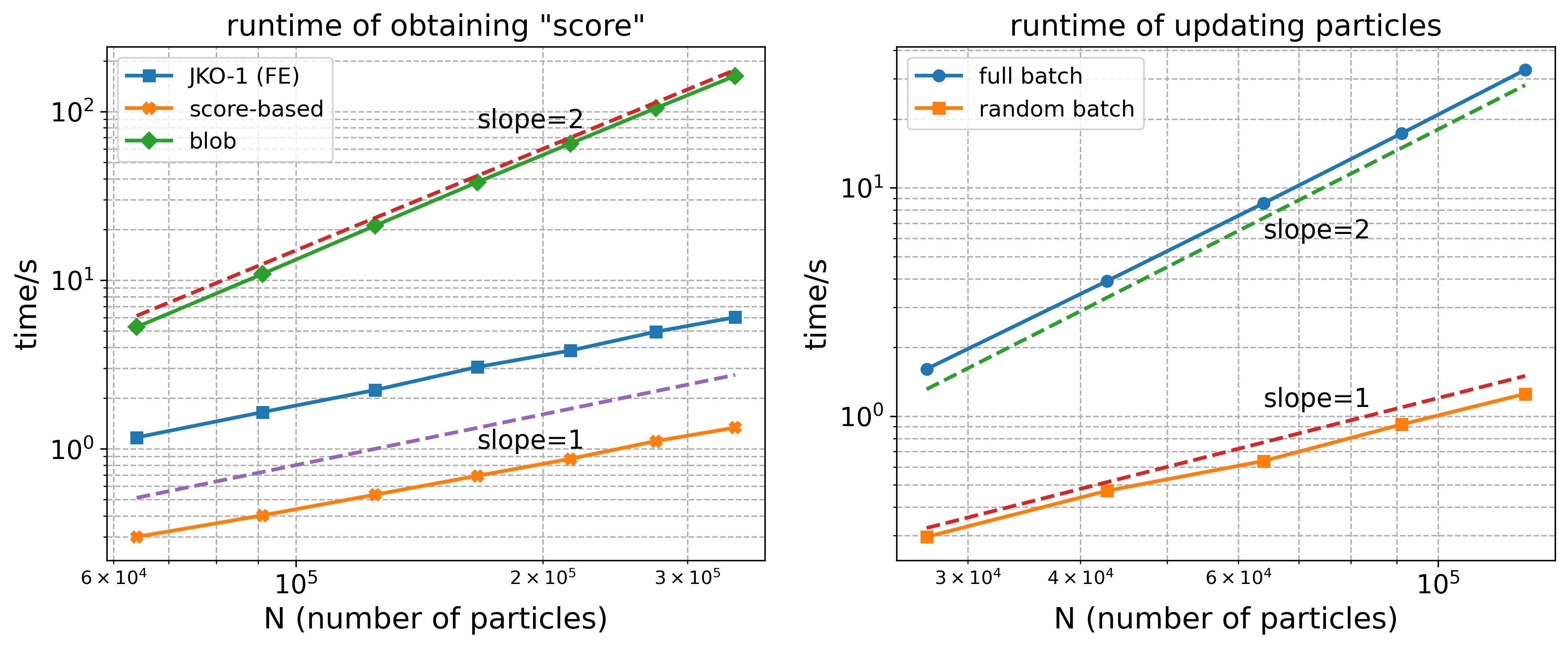}}
    \caption{Comparison of the runtime (in seconds) for a 3D Rosenbluth problem with Coulomb interaction in weakly collisional regime $(C_\gamma=\frac{1}{4\pi})$ on GPU. Left: a comparison for obtaining the ``score'' function using the blob, score-based, and JKO--1 (FE) particle methods. Right: a comparison for updating particles using full batch and random batch methods.}
    \label{fig_eg4_time}
\end{figure}

To demonstrate the stability of the JKO approach in the Coulomb interaction, we consider a strong collisional regime with $C_\gamma = 100$ and apply the JKO--3 (RK4) approach using a relatively large time step of $\Delta t = 1$ and $N=25^3$ particles. The results, shown in Fig. \ref{fig_eg4_macro}, indicate that our method remains stable despite the large time step and that the entropy approaches equilibrium effectively. In contrast, the score-based particle method requires a much smaller time step $\Delta t = 0.01$ to remain stable and still fails to stay near equilibrium. Moreover, the JKO--3 method finishes in only 243 seconds, while the score-based particle method takes 1014 seconds, clearly showing that the JKO-based approach is significantly more efficient in stiff regimes.

\begin{figure}[htbp]
    \centerline{\includegraphics[scale=0.38]{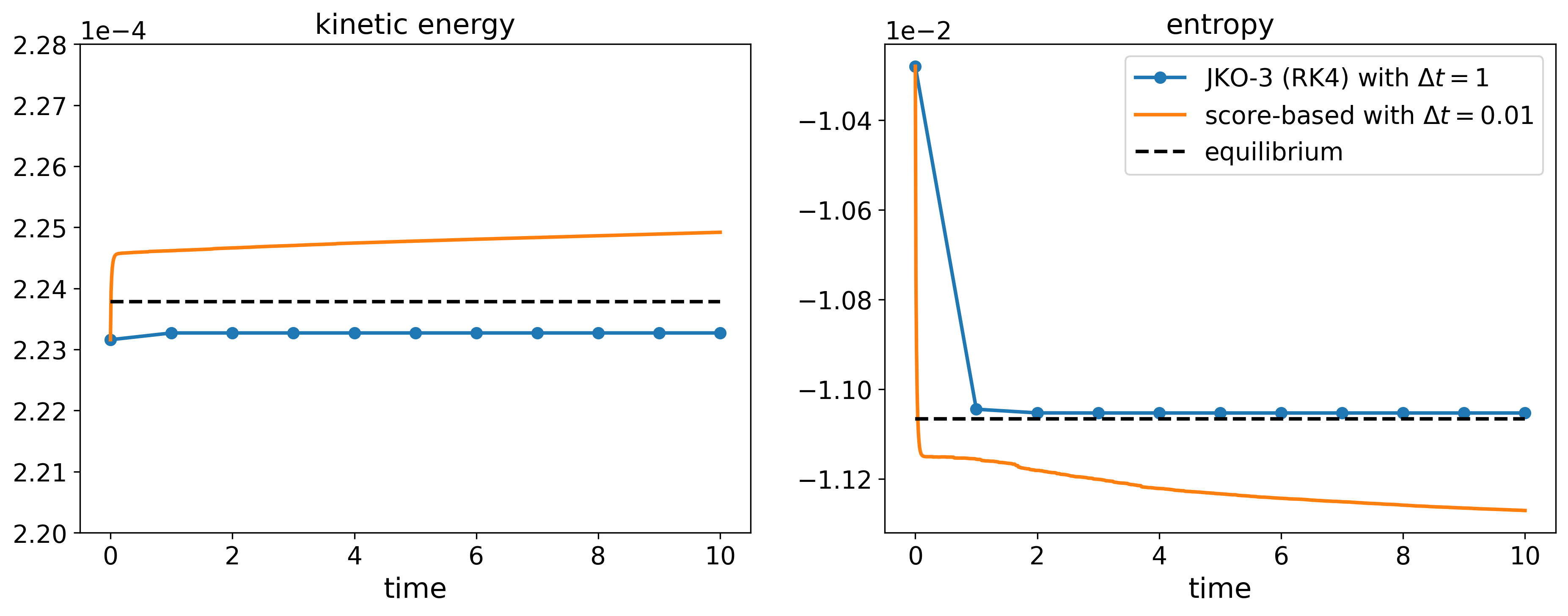}}
    \caption{Comparison between JKO--3 (RK4) and score-based particle methods for a 3D Rosenbluth problem with Coulomb interaction in strongly collisional regime $(C_\gamma=100)$. The time step for JKO--3 (RK4) and score-based particle method are $\Delta t=1$ and $\Delta t=0.01$, respectively. Left: time evolution of the kinetic energy. Right: time evolution of the entropy.}
    \label{fig_eg4_macro}
\end{figure}

\subsection{10D anisotropic solution with Maxwellian molecules}
In this test, we aim to demonstrate the potential of our proposed method in managing high-dimensional problems, leveraging the use of particles and neural networks for approximation. The example we use is adapted from \cite[Example 5.3]{ilin2024}.

Consider the Maxwellian collision kernel $A(z)=|z|^2I_d - z \otimes z$ with collision strength $C_\gamma=1$, and set the initial distribution as a normal distribution with zero mean and covariance $P_{i,j} = \delta_{i,j} p_i \,, p_1=1.8 \,, p_2=0.2 \,, p_i=1 \,, i=3,\ldots,d$. In this case, there is no explicit formula for the density, but there is one for the covariance matrix (stress tensor), $P(t) = \int_{\R^d} v \otimes v f(t, v) \rd v$, as shown in \cite{villani1998a}: 
\begin{equation*}
    P_{i,j}(t) = P_{i,j}(\infty) - (P_{i,j}(\infty)-P_{i,j}(0)) e^{-4dt} \,,~ P_{i,j}(\infty) = \frac{E}{d} \delta_{i,j} \,,~ E = \int_{\R^d} |v|^2 f \rd v \,.
\end{equation*}

In our test, we consider a dimension of $d=10$ and set the time step to $\Delta t=0.002$, using a total of $N=25600$ particles and a batch size of $1280$. The learning rate is set to \(\alpha = 1 \times 10^{-2}\) and we train for $30$ epochs for the first 20 JKO steps. For subsequent JKO steps, the learning rate is reduced to \(\alpha = 1 \times 10^{-3}\) and we train for $5$ epochs. 

To examine the accuracy of our method, we compare the numerical covariance using particles
$P^{N}_{i,j}(t^n) = \frac{1}{N} \sum_{k=1}^N (v^n_k)_i (v^n_k)_j$ with the analytical solution. Fig.~\ref{fig_eg5_cov} plots the time evolution of selected diagonal elements of the numerical covariance, which closely matches the analytical covariance. Moreover, we compute the Frobenius norm between the numerical and analytical covariances, 
$\sqrt{\sum_{i,j=1}^N (P^{N}_{i,j} - P_{i,j})^2}$, and the results are presented in Fig. \ref{fig_eg5_cov_err}. On the left, we show the time evolution of the Frobenius error with a fixed particle number $N=25600$, which remains small throughout the experiment. On the right, we track the convergence in particle number $N$ by computing the following average Frobenius error:
\begin{equation*}
    c_N = \sqrt{\frac{1}{J} \sum_{q=1}^J \sum_{i,j=1}^N |P^{N}_{i,j} - P_{i,j}|^2 } \,.
\end{equation*}
We compute $c_N$ over $J=10$ runs for each $N$. We again observe the expected Monte Carlo convergence rate of $-\frac{1}{2}$, which is independent of the dimension.

\begin{figure}[htbp]
    \centerline{\includegraphics[scale=0.38]{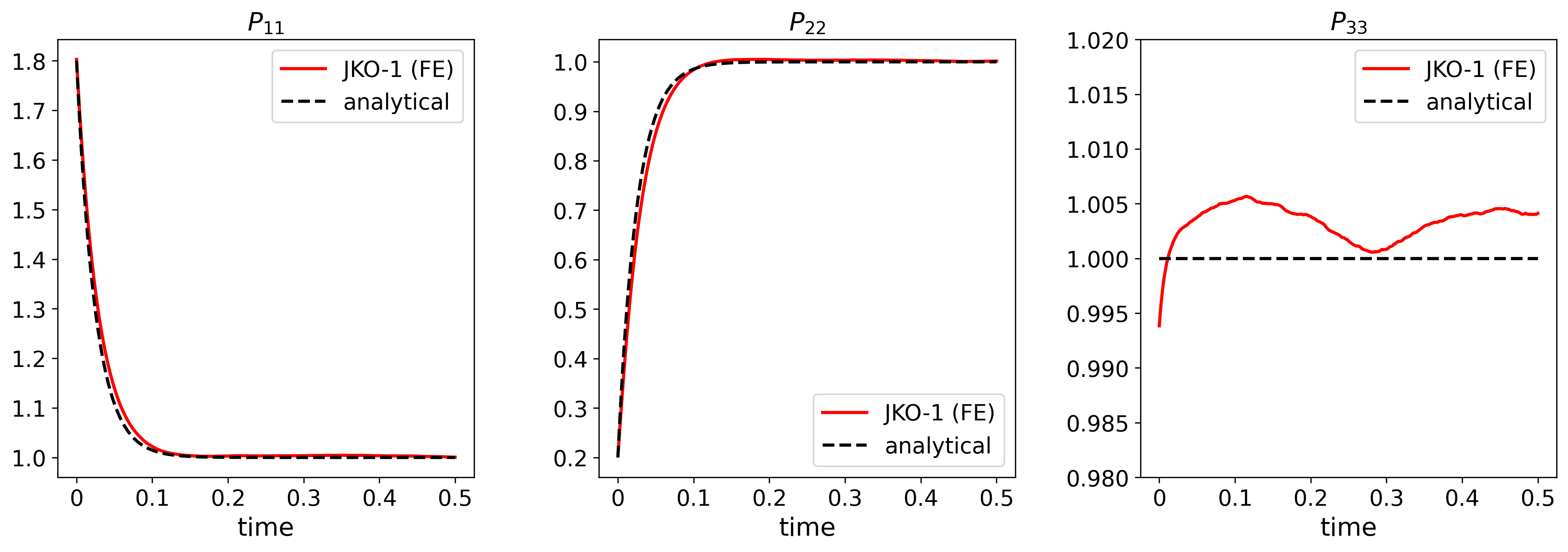}}
    \caption{Time evolution of the first to third diagonal elements of the covariance matrix for the 10-D example.}
    \label{fig_eg5_cov}
\end{figure}
\begin{figure}[htbp]
    \centerline{\includegraphics[scale=0.38]{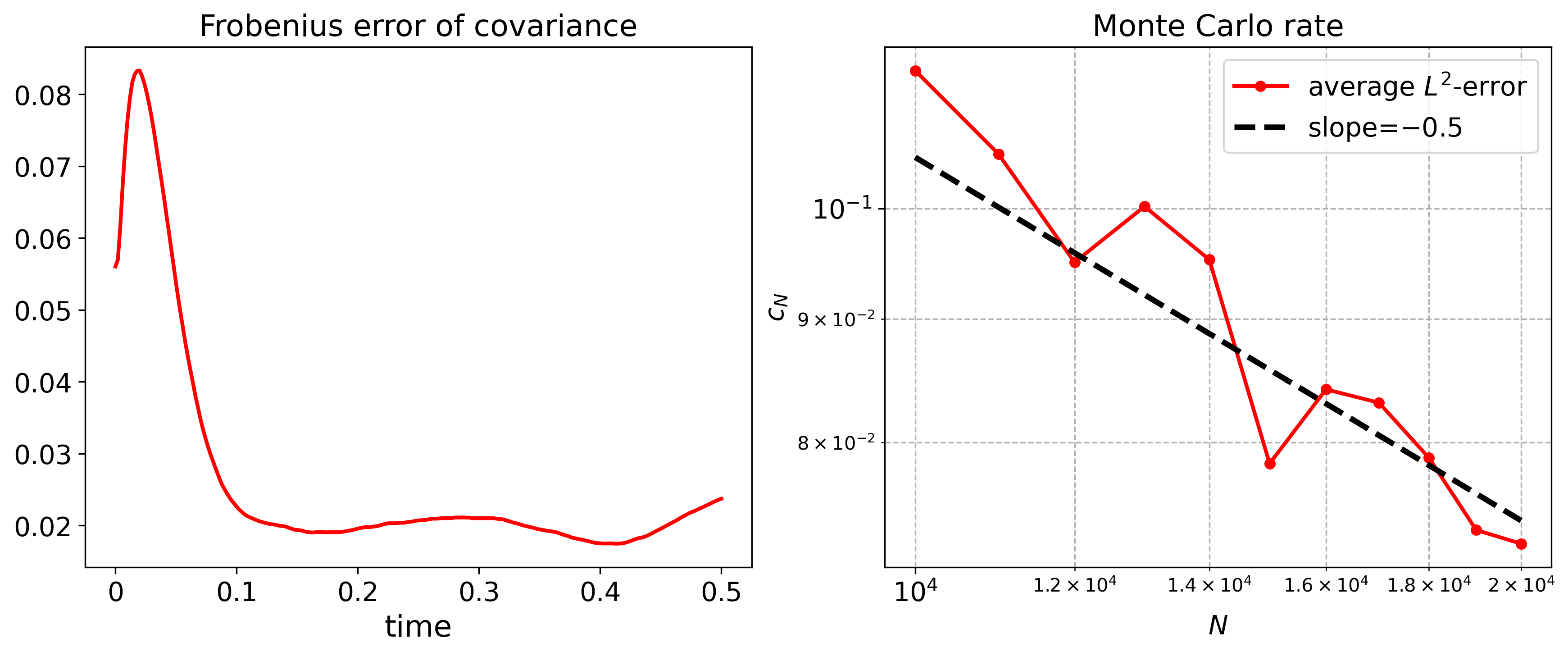}}
    \caption{Error in covariance for the 10-D example. Left: Frobenius error versus time with fixed particle number $N=25600$. Right: rate of convergence of the Frobenius error with respect to particle numbers.}
    \label{fig_eg5_cov_err}
\end{figure}

\section{Conclusion and discussion}
In this paper, we design a neural network-based variational particle method for solving the homogeneous Landau equation, leveraging the underlying gradient flow structure. To address the complexity of the Landau metric, our main contributions include reformulating it into a form that can be easily represented by particles and recognizing that the unknown part is simply a score function of the corresponding density. These key insights provide essential guidance for neural network approximation and initialization. It is important to note that the objective function, when represented using particles, is in the form of a double summation, which motivates us to design a tailored mini-batch stochastic optimization method. The particle update is further accelerated using the random batch method. 

We have demonstrated that, compared to recent score-based particle methods for the Landau equation \cite{HUANG2025114053, ilin2024}, our approach involves a more complex objective function. However, this complexity is significantly mitigated by the use of stochastic methods. Additionally, our method offers unique advantages that the score-based particle method does not, including exact entropy decay and unconditional stability. When combined with the particle-in-cell method \cite{filbet2003numerical, chen2011energy, bailo2024collisional}, these features make our approach promising for large-scale long-time plasma simulations.

\appendix
\section{Review of general gradient flow theory}\label{app_def}
We recall some important definitions of gradient flow on metric spaces that can be found in Chapter 1 of \cite{Ambrosio2005GradientFI}. Throughout this section, we consider the complete metric space $(X, d)$ and the proper extended real functional $\varphi: X \to (-\infty, +\infty]$.

\begin{definition}[Absolutely continuous curves]
    A curve $\mu: (a,b) \to X$ is said to be an absolutely continuous curve if there exists $m \in L^2(a,b)$ such that $d(\mu_s, \mu_t) \leq \int_s^t m(r) \rd r$, for $\forall a < s \leq t < b$.
\end{definition}

\begin{definition}[Metric derivative]
    For any absolutely continuous curve $\mu: (a,b) \to X$, the limit
    $|\mu'|(t) := \lim_{s \to t} \frac{d(\mu(s), \mu(t))}{|s-t|}$,
    exists for a.e. $t \in (a,b)$, and we define the limit as metric derivative of $\mu$ at $t$.
\end{definition}

\begin{definition}[Strong upper gradients]
    A function $g: X \to [0, +\infty]$ is a strong upper gradient for $\varphi$ if for every  absolutely continuous curve $\mu: (a,b) \to X$, the function $g \circ \mu$ is Borel and 
    $|\varphi(\mu_t) - \varphi(\mu_s)| \leq \int_s^t g(\mu_r) |\mu'|(r) \rd r$, for $\forall a < s \leq t < b$.
\end{definition}

\begin{definition}[Curves of maximal slope] \label{def-maxslope}
    An absolutely continuous curve $\mu: (a,b) \to X$ is said to be a curve of maximal slope for $\varphi$ with respect to its strong upper gradient $g$, if $\varphi \circ \mu$ is equal to a non-increasing map a.e. in $(a,b)$ and 
    \begin{equation*}
        \varphi(\mu_t) - \varphi(\mu_s) + \frac{1}{2} \int_s^t g(\mu_r)^2 \rd r + \frac{1}{2} \int_s^t |\mu'|^2(r) \rd r \leq 0 \,,~ \forall a < s \leq t < b \,.
    \end{equation*}
\end{definition}
Note that the above inequality is indeed equality by applying Young's inequality in the definition of the strong upper gradient.

\section{Lemmas for Theorem \ref{thm_opt_conv}}
\begin{lemma}\label{sgd_lemma1}
Suppose that assumptions (A.2) and (A.3) hold. Let learning rate $\alpha^{(k)}_q := \alpha_k \frac{B}{N} > 0$ for a given sequence $\{ \alpha_k \}$ such that $0 < \alpha_k \leq \frac{1}{L\sqrt{3}}$. Then we have
    \begin{equation*}
        \Delta := \sum_{q=1}^{\frac{N}{B}} \left\| \theta^{(k)}_{q-1} - \theta^{(k)}_0 \right\|^2
        \leq
        \frac{\alpha_k^2 N^2}{B^2} \left( (3M+2) \left\| \nabla_{\theta} \ell(\theta^{(k)}_0) \right\|^2 + 3\sigma^2 \right) \,.
    \end{equation*}
\end{lemma}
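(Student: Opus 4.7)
The plan is to unroll the per-step recursion for $\theta^{(k)}_q$, apply Cauchy--Schwarz to reduce $\Delta$ to a sum of squared mini-batch gradients, and then close a self-bounding inequality using the $L$-smoothness (A.2) and the variance bound (A.3). The learning-rate restriction $\alpha_k \le 1/(L\sqrt{3})$ will be exactly what is needed to absorb the residual $\Delta$ on the right.

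First, since $\theta^{(k)}_q - \theta^{(k)}_{q-1} = -\alpha_k (B/N)\, g_q$ with $g_q := B^{-2}\sum_{i,j\in C_q}\nabla_\theta \ell_{i,j}(\theta^{(k)}_{q-1})$, telescoping yields $\theta^{(k)}_{q-1} - \theta^{(k)}_0 = -\alpha_k (B/N)\sum_{p=1}^{q-1} g_p$. Squaring, applying Cauchy--Schwarz to the sum, summing over $q=1,\ldots,N/B$ using $q-1 \le N/B$, and swapping the order of summation leads to the clean intermediate estimate $\Delta \le \alpha_k^2 \sum_{p=1}^{N/B}\|g_p\|^2$.

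Next, I decompose each mini-batch gradient as
\[
g_p \;=\; \bigl(g_p - \nabla_\theta \ell(\theta^{(k)}_{p-1})\bigr) \;+\; \bigl(\nabla_\theta \ell(\theta^{(k)}_{p-1}) - \nabla_\theta \ell(\theta^{(k)}_0)\bigr) \;+\; \nabla_\theta \ell(\theta^{(k)}_0),
\]
and use $\|a+b+c\|^2 \le 3(\|a\|^2+\|b\|^2+\|c\|^2)$. The middle piece is handled by (A.2), contributing $3L^2\|\theta^{(k)}_{p-1} - \theta^{(k)}_0\|^2$, whose sum over $p$ is exactly $3L^2\Delta$. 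The last piece summed over $p$ gives $3(N/B)\|\nabla_\theta \ell(\theta^{(k)}_0)\|^2$. For the noise piece I use Jensen to get $\|g_p - \nabla_\theta \ell(\theta^{(k)}_{p-1})\|^2 \le B^{-2}\sum_{i,j\in C_p}\|\nabla_\theta \ell_{i,j}(\theta^{(k)}_{p-1}) - \nabla_\theta \ell(\theta^{(k)}_{p-1})\|^2$, then re-expand each increment around $\theta^{(k)}_0$ (picking up another $\Delta$ via smoothness), and finally invoke (A.3) \emph{once, at} $\theta^{(k)}_0$, exploiting that the pair sums over disjoint $C_p$ can be aggregated and enlarged to the full $N^2$-pair sum at the price of the factor $N^2/B^2$. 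This produces the $(M\|\nabla_\theta \ell(\theta^{(k)}_0)\|^2 + \sigma^2)\cdot N^2/B^2$ contribution that matches the RHS of the lemma.

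Collecting the three pieces gives an inequality of the form
\[
\Delta \;\le\; c\, \alpha_k^2 L^2 \Delta \;+\; \alpha_k^2 \frac{N^2}{B^2}\Bigl[(3M+2)\|\nabla_\theta \ell(\theta^{(k)}_0)\|^2 + 3\sigma^2\Bigr]
\]
for an absolute constant $c$. The choice $\alpha_k \le 1/(L\sqrt{3})$ ensures $c\alpha_k^2 L^2$ is strictly below $1$, so $\Delta$ can be absorbed into the left-hand side to yield the claimed bound with the stated constants.

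The hard part of the plan is the combinatorial bookkeeping in the noise term. A naive application of (A.3) batch-by-batch at each $\theta^{(k)}_{p-1}$ produces an $N^3/B^3$ scaling that is too lossy; avoiding this requires either (i) summing over $p$ first so that the disjoint batch-pair sums merge into an $\le N^2$-pair sum, followed by a single use of (A.3), or (ii) taking expectation over the random reshuffling so linearity collapses the batch sums into $N^{-2}\sum_{i,j}$ before (A.3) is invoked. Once that aggregation is executed carefully, the remaining work is tracking coefficients precisely enough to land on the advertised $(3M+2)$ and $3\sigma^2$, which is routine algebra.
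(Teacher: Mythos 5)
Your overall architecture is the right one and matches the paper's: unroll the update, apply Cauchy--Schwarz, split each aggregated gradient into three pieces, bound two of them by (A.2) and (A.3), and absorb the resulting $\Delta$ term using the learning-rate restriction. The intermediate estimate $\Delta \le \alpha_k^2 \sum_{p}\|g_p\|^2$ is also correct. The gap is in the choice of decomposition. You write
\[
g_p = \bigl(g_p - \nabla_\theta\ell(\theta^{(k)}_{p-1})\bigr) + \bigl(\nabla_\theta\ell(\theta^{(k)}_{p-1}) - \nabla_\theta\ell(\theta^{(k)}_0)\bigr) + \nabla_\theta\ell(\theta^{(k)}_0),
\]
centering the noise at $\theta^{(k)}_{p-1}$. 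Since (A.3) is a statement about the full population at a single point, you are then forced (as you acknowledge) to re-expand $\|\nabla_\theta\ell_{i,j}(\theta^{(k)}_{p-1}) - \nabla_\theta\ell(\theta^{(k)}_{p-1})\|^2$ around $\theta^{(k)}_0$, which costs another three-way triangle inequality: $\le 3\|\nabla_\theta\ell_{i,j}(\theta^{(k)}_0)-\nabla_\theta\ell(\theta^{(k)}_0)\|^2 + 6L^2\|\theta^{(k)}_{p-1}-\theta^{(k)}_0\|^2$. Tracking the constants, the self-bounding inequality you obtain has the form $\Delta \le 3\alpha_k^2(L^2 + 6L^2)\Delta + 9\alpha_k^2\tfrac{N^2}{B^2}(M\|\nabla_\theta\ell(\theta^{(k)}_0)\|^2+\sigma^2) + \dots$, i.e.\ the coefficient of $\Delta$ is at least $21\alpha_k^2L^2$ (about $\tfrac{21}{2}\alpha_k^2L^2$ with the sharpest index bookkeeping). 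Under the stated restriction $\alpha_k \le \tfrac{1}{L\sqrt{3}}$ this coefficient is at least $\tfrac{7}{2} > 1$, so the absorption step fails; moreover the noise constant becomes $9M$ rather than $3M$, so the advertised $(3M+2)$ and $3\sigma^2$ cannot be recovered. This is not "routine algebra" --- it is where the argument breaks.

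The fix is the paper's decomposition: split the \emph{per-sample} gradient as
\[
\nabla_\theta\ell_{i,j}(\theta^{(k)}_{p-1}) = \bigl(\nabla_\theta\ell_{i,j}(\theta^{(k)}_{p-1}) - \nabla_\theta\ell_{i,j}(\theta^{(k)}_0)\bigr) + \bigl(\nabla_\theta\ell_{i,j}(\theta^{(k)}_0) - \nabla_\theta\ell(\theta^{(k)}_0)\bigr) + \nabla_\theta\ell(\theta^{(k)}_0),
\]
so that the drift is measured sample-wise (handled by one application of (A.2), contributing only $L^2\|\theta^{(k)}_{p-1}-\theta^{(k)}_0\|^2$ per term) and the noise is already centered at $\theta^{(k)}_0$ (handled by one application of (A.3) after enlarging the disjoint batch-pair sums to the full $N^2$-pair sum, exactly as in your option (i)). Each assumption is then invoked exactly once with no re-expansion, the coefficient of $\Delta$ comes out as $\tfrac{3}{2}\alpha_k^2L^2 \le \tfrac12$ under $\alpha_k \le \tfrac{1}{L\sqrt3}$, and the constants $(3M+2)$ and $3\sigma^2$ fall out. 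Your worry about an $N^3/B^3$ blow-up in the aggregation was a red herring; the real issue is where the noise term is anchored.
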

\begin{proof}
By definition, $\theta^{(k)}_q - \theta^{(k)}_0 = \frac{\alpha_k}{N B} \sum_{p=1}^{q} \sum_{i,j \in C_p} \nabla_{\theta} \ell_{i,j}(\theta^{(k)}_{p-1})$.
Then, by using Cauchy-Schwarz's inequality $\| \sum_{i=1}^n a_i \|^2 \leq n \sum_{i=1}^n \| a_i\|^2$ repeatedly, we have
\begin{flalign*}
    \| \theta^{(k)}_q - \theta^{(k)}_0 \|^2 
    & \leq \frac{3\alpha_k^2 q^2 B^2}{N^2} \bigg[ \| \frac{1}{qB^2} \sum_{p=1}^{q} \sum_{i,j \in C_p} \left( \nabla_{\theta} \ell_{i,j}(\theta^{(k)}_{p-1}) - \nabla_{\theta} \ell_{i,j}(\theta^{(k)}_{0}) \right) \|^2 + \\
    & \hspace{2cm}  \| \frac{1}{qB^2} \sum_{p=1}^{q} \sum_{i,j \in C_p} \left( \nabla_{\theta} \ell_{i,j}(\theta^{(k)}_{0}) - \nabla_{\theta} \ell(\theta^{(k)}_{0}) \right) \|^2 + \| \nabla_{\theta} \ell(\theta^{(k)}_{0}) \|^2 \bigg] \\
    & \leq \frac{3\alpha_k^2 q^2 B^2}{N^2} \bigg[ \frac{1}{qB^2} \sum_{p=1}^{q} \sum_{i,j \in C_p}  \| \nabla_{\theta} \ell_{i,j}(\theta^{(k)}_{p-1}) - \nabla_{\theta} \ell_{i,j}(\theta^{(k)}_{0}) \|^2 + \\
    & \hspace{2cm} \frac{1}{qB^2} \sum_{i,j =1}^N \| \nabla_{\theta} \ell_{i,j}(\theta^{(k)}_{0}) - \nabla_{\theta} \ell(\theta^{(k)}_{0}) \|^2 + \| \nabla_{\theta} \ell(\theta^{(k)}_{0}) \|^2 \bigg] \\
    & \leq \frac{3\alpha_k^2 q^2 B^2}{N^2} \left[ \frac{L^2}{q} \sum_{p=1}^{q} \| \theta^{(k)}_{p-1} - \theta^{(k)}_{0} \|^2 + \frac{N^2}{qB^2} \left( M \| \nabla_{\theta} \ell(\theta^{(k)}_{0}) \|^2 + \sigma^2 \right) + \| \nabla_{\theta} \ell(\theta^{(k)}_{0}) \|^2 \right] \\
    & \leq \frac{3\alpha_k^2 L^2 B^2 q}{N^2} \Delta + 3\alpha_k^2 \left[ q \left( M \| \nabla_{\theta} \ell(\theta^{(k)}_{0}) \|^2 + \sigma^2 \right) + \frac{q^2 B^2}{N^2} \| \nabla_{\theta} \ell(\theta^{(k)}_{0}) \|^2 \right] \,,
\end{flalign*}
where we have used the assumptions (A.2) and (A.3) in the second last inequality. Summing from $1$ to $\frac{N}{B}$,
\begin{flalign*}
    \Delta = \sum_{q=1}^{\frac{N}{B}} \| \theta^{(k)}_{q-1} - \theta^{(k)}_0 \|^2 
    \leq \frac{3\alpha_k^2 L^2}{2} \Delta + \frac{3\alpha_k^2 N^2}{2B^2} \left( M \left\| \nabla_{\theta} \ell(\theta^{(k)}_{0}) \right\|^2 + \sigma^2 \right) + \frac{\alpha_k^2 N}{B} \left\| \nabla_{\theta} \ell(\theta^{(k)}_{0}) \right\|^2 \,.
\end{flalign*}
Since $\alpha_k \leq \frac{1}{L\sqrt{3}}$, then $\frac{3\alpha_k^2 L^2}{2} \leq \frac{1}{2}$. Therefore, 
\begin{equation*}
    \Delta \leq \frac{\alpha_k^2 N^2}{B^2} \left( (3M+2) \left\| \nabla_{\theta} \ell(\theta^{(k)}_{0}) \right\|^2 + 3\sigma^2 \right) \,.
\end{equation*}
\end{proof}

\begin{lemma}\label{sgd_lemma2}
Suppose that assumption (A.2) holds. Let learning rate $\alpha^{(k)}_q := \alpha_k \frac{B}{N} > 0$ for a given sequence $\{ \alpha_k \}$ such that $0 < \alpha_k \leq \frac{1}{L}$. Then we have
    \begin{equation*}
        \ell(\theta_{k+1}) 
        \leq 
        \ell(\theta_{k}) + \frac{\alpha_k L^2 B}{2N} \sum_{q=1}^{\frac{N}{B}} \| \theta_{k} - \theta^{(k+1)}_{q-1} \|^2 - \frac{\alpha_k}{2} \| \nabla\ell(\theta_{k}) \|^2 \,.
    \end{equation*}
\end{lemma}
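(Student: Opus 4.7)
The plan is to apply the standard descent inequality for an $L$-smooth loss to one full epoch of the inner SGD iteration, decompose the aggregated stochastic step into an ``ideal'' gradient step at the anchor $\theta_k$ plus a drift term, and then balance the two contributions using Young's inequality together with the step-size restriction $\alpha_k L \le 1$.

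Concretely, I would telescope the inner updates to write $\theta_{k+1} - \theta_k = -\alpha_k G$ with
\[
G := \frac{B}{N}\sum_{q=1}^{N/B} \frac{1}{B^2}\sum_{i,j \in C_q} \nabla_\theta \ell_{i,j}(\theta^{(k+1)}_{q-1}),
\]
and apply the descent inequality (which follows from (A.2) averaged over $i,j$) to obtain
\[
\ell(\theta_{k+1}) \le \ell(\theta_k) - \alpha_k \langle \nabla \ell(\theta_k), G\rangle + \frac{L \alpha_k^2}{2}\|G\|^2.
\]
Next I would introduce the frozen-anchor reference $G_0 := \frac{1}{NB}\sum_q \sum_{i,j \in C_q} \nabla_\theta \ell_{i,j}(\theta_k)$, which matches $\nabla \ell(\theta_k)$ under the paper's pairwise averaging convention, and set $E := G - G_0$. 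A Cauchy--Schwarz argument to pass the squared norm inside the averages, combined with (A.2) applied to each individual $\ell_{i,j}$, gives
\[
\|E\|^2 \le \frac{L^2 B}{N}\sum_{q=1}^{N/B}\|\theta_k - \theta^{(k+1)}_{q-1}\|^2,
\]
which is exactly the drift quantity appearing on the right-hand side of the lemma.

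Substituting $G = \nabla \ell(\theta_k) + E$ into the descent inequality and expanding $\|G\|^2$ produces two ``clean'' terms proportional to $\|\nabla \ell(\theta_k)\|^2$ and $\|E\|^2$, together with a cross term $(L\alpha_k^2 - \alpha_k)\langle \nabla \ell(\theta_k), E\rangle$. Bounding the cross term by Young's inequality with constant $1$, i.e., $|\langle a,b\rangle| \le \tfrac{1}{2}(\|a\|^2 + \|b\|^2)$, and then using $\alpha_k L \le 1$, the quadratic-in-$\alpha_k$ contributions telescope exactly, leaving the coefficient of $\|\nabla \ell(\theta_k)\|^2$ equal to $-\alpha_k/2$ and that of $\|E\|^2$ equal to $\alpha_k/2$. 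Plugging in the drift bound on $\|E\|^2$ then yields the claimed inequality.

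The main obstacle is the sharp algebraic bookkeeping in the final step: the coefficient $-\alpha_k/2$ on $\|\nabla \ell(\theta_k)\|^2$ only emerges when Young's constant is set to $1$ and the step-size restriction $\alpha_k L \le 1$ is used tightly; any looser choice would either leave a positive residual on the gradient-norm term (defeating descent) or inflate the drift coefficient beyond $\tfrac{\alpha_k L^2 B}{2N}$. A secondary care point is identifying $G_0$ with $\nabla \ell(\theta_k)$ under the double-sum averaging used in the definition of $\ell$.
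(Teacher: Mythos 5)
Your argument is correct and is essentially the paper's proof: both start from the $L$-smoothness descent inequality applied to the full-epoch step $\theta_{k+1}-\theta_k=-\alpha_k G$, isolate the drift $E=G-\nabla\ell(\theta_k)$, bound $\|E\|^2$ by Jensen plus (A.2) to get the $\frac{L^2B}{N}\sum_q\|\theta_k-\theta^{(k+1)}_{q-1}\|^2$ term, and use $\alpha_kL\le 1$ to absorb the remaining quadratic term; your Young-with-constant-one treatment of the cross term is algebraically identical to the paper's use of the polarization identity $\langle a,b\rangle=\tfrac12(\|a\|^2+\|b\|^2-\|a-b\|^2)$ followed by dropping $\bigl(\tfrac{L\alpha_k^2}{2}-\tfrac{\alpha_k}{2}\bigr)\|G\|^2\le 0$. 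The one caveat you flag --- identifying the within-batch pair average $G_0$ with $\nabla\ell(\theta_k)$, which averages over all $N^2$ pairs --- is a real imprecision, but it is the same unstated simplification the paper itself makes when it applies $L$-smoothness directly to $\|\nabla\ell(\theta_k)-\nabla_\theta\ell_{i,j}(\theta^{(k+1)}_{q-1})\|$, so your proposal matches the paper's level of rigor.
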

\begin{proof}
By assumption (A.2), $\ell$ is also $L$-smooth. Then we derive
\begin{flalign*}
    \ell(\theta_{k+1}) 
    & \leq \ell(\theta_{k}) + \langle \nabla\ell(\theta_{k}), \theta_{k+1} - \theta_{k} \rangle + \frac{L}{2} \left\| \theta_{k+1} - \theta_{k} \right\|^2 \\
    & = \ell(\theta_{k}) - \frac{\alpha_k}{NB} \langle \nabla\ell(\theta_{k}), \sum_{q=1}^{\frac{N}{B}} \sum_{i,j \in C_q} \nabla_{\theta} \ell_{i,j}(\theta^{(k+1)}_{q-1}) \rangle + \frac{L}{2} \frac{\alpha_k^2}{N^2 B^2} \| \sum_{q=1}^{\frac{N}{B}} \sum_{i,j \in C_q} \nabla_{\theta} \ell_{i,j}(\theta^{(k+1)}_{q-1}) \|^2 \\
    & = \ell(\theta_{k}) + \frac{\alpha_k}{2} \| \nabla\ell(\theta_{k}) - \frac{1}{BN} \sum_{q=1}^{\frac{N}{B}} \sum_{i,j \in C_q} \nabla_{\theta} \ell_{i,j}(\theta^{(k+1)}_{q-1}) \|^2 - \frac{\alpha_k}{2} \| \nabla\ell(\theta_{k}) \|^2 \\
    & \hspace{0.5cm} + \left( \frac{L}{2} \frac{\alpha_k^2}{N^2 B^2} - \frac{\alpha_k}{2N^2 B^2} \right) \| \sum_{q=1}^{\frac{N}{B}} \sum_{i,j \in C_q} \nabla_{\theta} \ell_{i,j}(\theta^{(k+1)}_{q-1}) \|^2 \,,
\end{flalign*}
where last equality uses the identity $ab = \frac{1}{2} (a^2 + b^2 - (a-b)^2)$.
Since $\alpha_k \leq \frac{1}{L}$, then we have $\frac{L}{2} \frac{\alpha_k^2}{N^2 B^2} - \frac{\alpha_k}{2N^2 B^2} \leq 0$. Thus,
\begin{flalign*}
    \ell(\theta_{k+1}) 
    & \leq \ell(\theta_{k}) 
    + \frac{\alpha_k}{2} \| \nabla\ell(\theta_{k}) - \frac{1}{BN} \sum_{q=1}^{\frac{N}{B}} \sum_{i,j \in C_q} \nabla_{\theta} \ell_{i,j}(\theta^{(k+1)}_{q-1}) \|^2 
    - \frac{\alpha_k}{2} \| \nabla\ell(\theta_{k}) \|^2 \\
    & \leq \ell(\theta_{k}) 
    + \frac{\alpha_k}{2} \frac{1}{BN} \sum_{q=1}^{\frac{N}{B}} \sum_{i,j \in C_q} \|  \nabla\ell(\theta_{k}) - \nabla_{\theta} \ell_{i,j}(\theta^{(k+1)}_{q-1}) \|^2 
    - \frac{\alpha_k}{2} \| \nabla\ell(\theta_{k}) \|^2 \\
    & \leq \ell(\theta_{k}) 
    + \frac{\alpha_k L^2 B}{2N} \sum_{q=1}^{\frac{N}{B}} \| \theta_{k} - \theta^{(k+1)}_{q-1} \|^2 
    - \frac{\alpha_k}{2} \| \nabla\ell(\theta_{k}) \|^2 \,.
\end{flalign*}
\end{proof}

\section*{Acknowledgments}
LW would like to thank Prof. Jose A. Carrillo and Dr. Jeremy Wu for inspiring discussion on the gradient flow formulation of the Landau equation. We also acknowledge ChatGPT for its assistance in refining the language and improving the clarity of the manuscript.

\section*{Funding}
This work is partially supported by NSF grant DMS-1846854, DMS-2513336, UMN DSI-SSG-4886888864, and the Simons Fellowship.

\bibliographystyle{plain}
\bibliography{ref.bib}

@techreport{landau1958kinetic,
    title={THE KINETIC EQUATION IN THE CASE OF {Coulomb} INTERACTION},
    author={Landau, Lev Davidovich},
    year={1958},
    institution={General Dynamics/Astronautics San Diego Calif}
}

@article{villani1998a,
    author = {Villani, C{\'e}dric},
    title = {ON THE SPATIALLY HOMOGENEOUS {Landau} EQUATION FOR {Maxwellian} MOLECULES},
    journal = {Mathematical Models and Methods in Applied Sciences},
    volume = {08},
    number = {06},
    pages = {957-983},
    year = {1998},
    eprint = { https://doi.org/10.1142/S0218202598000433}
}

@article{Villani1998b,
    author={Villani, C{\'e}dric},
    title={On a New Class of Weak Solutions to the Spatially Homogeneous {Boltzmann} and {Landau} Equations},
    journal={Archive for Rational Mechanics and Analysis},
    year={1998},
    month={Sep},
    day={01},
    volume={143},
    number={3},
    pages={273-307},
}

@article{villani2000a,
    author = {Laurent Desvillettes and Cédric Villani},
    title = {On the spatially homogeneous {Landau} equation for hard potentials part i : existence, uniqueness and smoothness},
    journal = {Communications in Partial Differential Equations},
    volume = {25},
    number = {1-2},
    pages = {179--259},
    year = {2000},
    publisher = {Taylor \& Francis},
}

@article{villani2000b,
    author = {Laurent Desvillettes and Cédric Villani},
    title = {On the spatially homogeneous {Landau} equation for hard potentials part ii : h-theorem and applications},
    journal = {Communications in Partial Differential Equations},
    volume = {25},
    number = {1-2},
    pages = {261--298},
    year = {2000},
    publisher = {Taylor \& Francis},
}

@article{Guo2002,
    author={Guo, Yan},
    title={The {Landau} Equation in a Periodic Box},
    journal={Communications in Mathematical Physics},
    year={2002},
    month={Dec},
    day={01},
    volume={231},
    number={3},
    pages={391-434},
}

@article{guillen2023landau,
    author = {Nestor Guillen and Luis Silvestre},
    title = {{The Landau equation does not blow up}},
    volume = {234},
    journal = {Acta Mathematica},
    number = {2},
    publisher = {Institut Mittag-Leffler},
    pages = {315 -- 375},
    year = {2025},
    doi = {10.4310/ACTA.2025.v234.n2.a2},
    URL = {https://doi.org/10.4310/ACTA.2025.v234.n2.a2}
}

@article{carrillo2024landau,
    title={The {Landau} equation as a Gradient Flow}, 
    author={José A. Carrillo and Matias G. Delgadino and Laurent Desvillettes and Jeremy S. H. Wu},
    year={2024},
    journal={Analysis and PDE},
    volume = {17},
    number = {4},
    pages = {1331-1375},
}

@article{degond1992fokker,
    title={The {Fokker-Planck} asymptotics of the {Boltzmann} collision operator in the {Coulomb} case},
    author={Degond, Pierre and Lucquin-Desreux, Brigitte},
    journal={Mathematical Models and Methods in Applied Sciences},
    volume={2},
    number={02},
    pages={167--182},
    year={1992},
    publisher={World Scientific}
}

@article{carrillo2020particle,
  title={A particle method for the homogeneous {Landau} equation},
  author={Carrillo, Jose A and Hu, Jingwei and Wang, Li and Wu, Jeremy},
  journal={Journal of Computational Physics: X},
  volume={7},
  pages={100066},
  year={2020},
  publisher={Elsevier}
}

@article{Carrillo_Jin_Tang_2022,
    author = {Carrillo, José Antonio and Jin, Shi and Tang, Yijia},
    title = {Random Batch Particle Methods for the Homogeneous {Landau} Equation},
    journal = {Communications in Computational Physics},
    year = {2022},
    volume = {31},
    number = {4},
    pages = {997--1019},
}

@article{Hirvijoki_2021,
    doi = {10.1088/1361-6587/abe884},
    url = {https://dx.doi.org/10.1088/1361-6587/abe884},
    year = {2021},
    month = {mar},
    publisher = {IOP Publishing},
    volume = {63},
    number = {4},
    pages = {044003},
    author = {Hirvijoki, Eero},
    title = {Structure-preserving marker-particle discretizations of {Coulomb} collisions for particle-in-cell codes},
    journal = {Plasma Physics and Controlled Fusion}
}

@article{Pareschi_Russo_Toscani_2000,
    title = {Fast Spectral Methods for the {Fokker–Planck–Landau} Collision Operator},
    journal = {Journal of Computational Physics},
    volume = {165},
    number = {1},
    pages = {216-236},
    year = {2000},
    issn = {0021-9991},
    author = {L. Pareschi and G. Russo and G. Toscani},
}

@article{ZHANG2017470,
    title = {A conservative scheme for {Vlasov Poisson Landau} modeling collisional plasmas},
    journal = {Journal of Computational Physics},
    volume = {340},
    pages = {470-497},
    year = {2017},
    issn = {0021-9991},
    doi = {https://doi.org/10.1016/j.jcp.2017.03.046},
    url = {https://www.sciencedirect.com/science/article/pii/S0021999117302474},
    author = {Chenglong Zhang and Irene M. Gamba}
}

@article{Li_Wang_Wang_2020,
    author = {Li, Ruo and Wang, Yanli and Wang, Yixuan},
    title = {Approximation to Singular Quadratic Collision Model in {Fokker--Planck--Landau} Equation},
    journal = {SIAM Journal on Scientific Computing},
    volume = {42},
    number = {3},
    pages = {B792-B815},
    year = {2020},
    doi = {10.1137/18M1230268},
    URL = {https://doi.org/10.1137/18M1230268},
    eprint = {https://doi.org/10.1137/18M1230268}
}

@article{dimarco2010direct,
    title={Direct simulation {Monte Carlo} schemes for {Coulomb} interactions in plasmas},
    author={Dimarco, Giacomo and Caflisch, Russell and Pareschi, Lorenzo},
    journal={Comm. App. Ind. Math.},
    volume={1},
    pages={72-91},
    year={2010}
}

@article{ROSIN2014140,
    title = {Multilevel {Monte Carlo} simulation of {Coulomb} collisions},
    journal = {Journal of Computational Physics},
    volume = {274},
    pages = {140-157},
    year = {2014},
    issn = {0021-9991},
    doi = {https://doi.org/10.1016/j.jcp.2014.05.030},
    author = {M.S. Rosin and L.F. Ricketson and A.M. Dimits and R.E. Caflisch and B.I. Cohen},
}

@article{buet1998conservative,
    title={Conservative and entropy decaying numerical scheme for the isotropic {Fokker--Planck--Landau equation}},
    author={Buet, C and Cordier, S},
    journal={Journal of Computational Physics},
    volume={145},
    number={1},
    pages={228--245},
    year={1998},
    publisher={Elsevier}
}

@article{degond1994entropy,
    title={An entropy scheme for the {Fokker-Planck} collision operator of plasma kinetic theory},
    author={Degond, Pierre and Lucquin-Desreux, Brigitte},
    journal={Numerische Mathematik},
    volume={68},
    number={2},
    pages={239--262},
    year={1994},
    publisher={Springer}
}

@article{TAITANO2015357,
    title = {A mass, momentum, and energy conserving, fully implicit, scalable algorithm for the multi-dimensional, multi-species {Rosenbluth–Fokker–Planck} equation},
    journal = {Journal of Computational Physics},
    volume = {297},
    pages = {357-380},
    year = {2015},
    issn = {0021-9991},
    doi = {https://doi.org/10.1016/j.jcp.2015.05.025},
    url = {https://www.sciencedirect.com/science/article/pii/S0021999115003599},
    author = {W.T. Taitano and L. Chacón and A.N. Simakov and K. Molvig},
}

@article{Lemou2005,
    author = {Lemou, Mohammed and Mieussens, Luc},
    title = {Implicit Schemes for the {Fokker--Planck--Landau} Equation},
    journal = {SIAM Journal on Scientific Computing},
    volume = {27},
    number = {3},
    pages = {809-830},
    year = {2005},
    doi = {10.1137/040609422},
    URL = {https://doi.org/10.1137/040609422},
    eprint = {https://doi.org/10.1137/040609422}
}

@article{filbet2003numerical,
    title={Numerical methods for the {Vlasov} equation},
    author={Filbet, Francis and Sonnendr{\"u}cker, Eric},
    journal={Numerical Mathematics and Advanced Applications: Proceedings of ENUMATH 2001 the 4th European Conference on Numerical Mathematics and Advanced Applications Ischia, July 2001},
    pages={459--468},
    year={2003},
    publisher={Springer}
}

@article{chen2011energy,
    title={An energy-and charge-conserving, implicit, electrostatic particle-in-cell algorithm},
    author={Chen, Guangye and Chac{\'o}n, Luis and Barnes, Daniel C},
    journal={Journal of Computational Physics},
    volume={230},
    number={18},
    pages={7018--7036},
    year={2011},
    publisher={Elsevier}
}

@article{bailo2024collisional, 
    title={The collisional particle-in-cell method for the {Vlasov–Maxwell–Landau} equations}, 
    volume={90},
    DOI={10.1017/S0022377824001077}, 
    number={4}, 
    journal={Journal of Plasma Physics}, 
    author={Bailo, Rafael and Carrillo, José A. and Hu, Jingwei}, 
    year={2024}, 
    pages={905900415}
}

@article{HUANG2025114053,
    title = {A score-based particle method for homogeneous {Landau} equation},
    journal = {Journal of Computational Physics},
    volume = {536},
    pages = {114053},
    year = {2025},
    issn = {0021-9991},
    doi = {https://doi.org/10.1016/j.jcp.2025.114053},
    url = {https://www.sciencedirect.com/science/article/pii/S0021999125003365},
    author = {Yan Huang and Li Wang},
}

@article{ilin2024,
    title={Transport based particle methods for the {Fokker-Planck-Landau} equation}, 
    journal = {Communications in Mathematical Sciences},
    author={Vasily Ilin and Jingwei Hu and Zhenfu Wang},
    volume = {23},
    number = {7},
    pages = {1763-1788},
    year = {2025},
}

@article{NOH2025113665,
    title = {{FPL}-net: A deep learning framework for solving the nonlinear {Fokker–Planck–Landau} collision operator for anisotropic temperature relaxation},
    journal = {Journal of Computational Physics},
    volume = {523},
    pages = {113665},
    year = {2025},
    issn = {0021-9991},
    doi = {https://doi.org/10.1016/j.jcp.2024.113665},
    url = {https://www.sciencedirect.com/science/article/pii/S0021999124009136},
    author = {Hyeongjun Noh and Jimin Lee and Eisung Yoon},
}

@article{Miller_Churchill_Dener_Chang_Munson_Hager_2021, 
    title={Encoder–decoder neural network for solving the nonlinear {Fokker–Planck–Landau} collision operator in {XGC}}, 
    volume={87}, 
    DOI={10.1017/S0022377821000155}, 
    number={2}, 
    journal={Journal of Plasma Physics}, 
    author={Miller, M. A. and Churchill, R. M. and Dener, A. and Chang, C. S. and Munson, T. and Hager, R.}, 
    year={2021}, 
    pages={905870211}
}

@article{LEE2023112031,
    title = {op{PINN}: Physics-informed neural network with operator learning to approximate solutions to the {Fokker-Planck-Landau} equation},
    journal = {Journal of Computational Physics},
    volume = {480},
    pages = {112031},
    year = {2023},
    issn = {0021-9991},
    doi = {https://doi.org/10.1016/j.jcp.2023.112031},
    url = {https://www.sciencedirect.com/science/article/pii/S0021999123001262},
    author = {Jae Yong Lee and Juhi Jang and Hyung Ju Hwang},
}

@article{CHUNG2023112400,
    title = {Data-driven stochastic particle scheme for collisional plasma simulations},
    journal = {Journal of Computational Physics},
    volume = {492},
    pages = {112400},
    year = {2023},
    issn = {0021-9991},
    doi = {https://doi.org/10.1016/j.jcp.2023.112400},
    url = {https://www.sciencedirect.com/science/article/pii/S0021999123004953},
    author = {K. Chung and F. Fei and M.H. Gorji and P. Jenny},
}

@book{Ambrosio2005GradientFI,
  title={Gradient Flows: In Metric Spaces and in the Space of Probability Measures},
  author={Luigi Ambrosio and Nicola Gigli and Giuseppe Savar{\'e}},
  year={2005},
  publisher={Birkh\"auser Basel}
}

@book{villani2003topics,
  title={Topics in Optimal Transportation},
  author={Villani,  C{\'e}dric},
  series={Graduate Studies in Mathematics},
  year={2003},
  publisher={American Mathematical Society}
}

@article{Benamou2000,
    author={Benamou, Jean-David and Brenier, Yann},
    title={A computational fluid mechanics solution to the {Monge-Kantorovich} mass transfer problem},
    journal={Numerische Mathematik},
    year={2000},
    month={Jan},
    day={01},
    volume={84},
    number={3},
    pages={375-393},
}

@article{jko,
    author = {Jordan, Richard and Kinderlehrer, David and Otto, Felix},
    title = {The Variational Formulation of the {Fokker--Planck} Equation},
    journal = {SIAM Journal on Mathematical Analysis},
    volume = {29},
    number = {1},
    pages = {1-17},
    year = {1998},
}

@article{Carrillo2022primaldual,
    author={Carrillo, Jos{\'e} A. and Craig, Katy and Wang, Li and Wei, Chaozhen},
    title={Primal Dual Methods for {Wasserstein} Gradient Flows},
    journal={Foundations of Computational Mathematics},
    year={2022},
    month={Apr},
    day={01},
    volume={22},
    number={2},
    pages={389-443},
}

@article{Carrillo2024primaldual,
    author = {Carrillo, Jos\'{e} A. and Wang, Li and Wei, Chaozhen},
    title = {Structure Preserving Primal Dual Methods for Gradient Flows with Nonlinear Mobility Transport Distances},
    journal = {SIAM Journal on Numerical Analysis},
    volume = {62},
    number = {1},
    pages = {376-399},
    year = {2024},
}

@article{LEE2024113187,
    title = {Deep {JKO}: Time-implicit particle methods for general nonlinear gradient flows},
    journal = {Journal of Computational Physics},
    volume = {514},
    pages = {113187},
    year = {2024},
    issn = {0021-9991},
    author = {Wonjun Lee and Li Wang and Wuchen Li},
}

@incollection{CARRILLO2021271,
    title = {Lagrangian schemes for {Wasserstein} gradient flows},
    publisher = {Elsevier},
    volume = {22},
    pages = {271-311},
    year = {2021},
    booktitle = {Handbook of Numerical Analysis},
    issn = {1570-8659},
    author = {Jose A. Carrillo and Daniel Matthes and Marie-Therese Wolfram},
}

@article{dimarino2025,
      title={Inexact {JKO} and proximal-gradient algorithms in the {Wasserstein} space}, 
      author={Simone Di Marino and Emanuele Naldi and Silvia Villa},
      year={2025},
      eprint={2505.23517},
      journal={arXiv 2505.23517},
      url={https://arxiv.org/abs/2505.23517}, 
}

@article{JIN2020108877,
    title = {Random Batch Methods {(RBM)} for interacting particle systems},
    journal = {Journal of Computational Physics},
    volume = {400},
    pages = {108877},
    year = {2020},
    issn = {0021-9991},
    author = {Shi Jin and Lei Li and Jian-Guo Liu},
}

@article{Bottou2018,
    author = {Bottou, L\'{e}on and Curtis, Frank E. and Nocedal, Jorge},
    title = {Optimization Methods for Large-Scale Machine Learning},
    journal = {SIAM Review},
    volume = {60},
    number = {2},
    pages = {223-311},
    year = {2018},
    doi = {10.1137/16M1080173},
    URL = {https://doi.org/10.1137/16M1080173},
    eprint = {https://doi.org/10.1137/16M1080173}
}

@article{sgd_shuffling,
    author  = {Lam M. Nguyen and Quoc Tran-Dinh and Dzung T. Phan and Phuong Ha Nguyen and Marten van Dijk},
    title   = {A Unified Convergence Analysis for Shuffling-Type Gradient Methods},
    journal = {Journal of Machine Learning Research},
    year    = {2021},
    volume  = {22},
    number  = {207},
    pages   = {1--44},
}

@InProceedings{pmlr-v267-he25u,
  title = 	 {Revisiting Convergence: Shuffling Complexity Beyond {L}ipschitz Smoothness},
  author =       {He, Qi and Yu, Peiran and Chen, Ziyi and Huang, Heng},
  booktitle = 	 {Proceedings of the 42nd International Conference on Machine Learning},
  pages = 	 {22816--22835},
  year = 	 {2025},
  volume = 	 {267},
  series = 	 {Proceedings of Machine Learning Research},
  month = 	 {13--19 Jul},
  publisher =    {PMLR},
  url = 	 {https://proceedings.mlr.press/v267/he25u.html},
}

@article{loshchilov2018decoupled,
    title={Decoupled Weight Decay Regularization},
    author={Ilya Loshchilov and Frank Hutter},
    journal={International Conference on Learning Representations},
    year={2019},
    url={https://openreview.net/forum?id=Bkg6RiCqY7},
}

@article{neuralode,
    author = {Chen, Ricky T. Q. and Rubanova, Yulia and Bettencourt, Jesse and Duvenaud, David K},
    journal = {Advances in Neural Information Processing Systems},
    title = {Neural Ordinary Differential Equations},
    volume = {31},
    year = {2018}
}

@article{grathwohl2018scalable,
    title={Scalable Reversible Generative Models with Free-form Continuous Dynamics},
    author={Will Grathwohl and Ricky T. Q. Chen and Jesse Bettencourt and David Duvenaud},
    journal={International Conference on Learning Representations},
    year={2019},
}

\end{document}